\documentclass[12pt,reqno]{amsart}
\usepackage{indentfirst,enumerate,cite,amssymb,amsmath,amsthm,mathrsfs,dsfont,mathtools}
\usepackage{color}
\usepackage{comment}
\usepackage[colorlinks=true,linkcolor=blue,citecolor=blue]{hyperref}
\usepackage{geometry}
\usepackage[utf8]{inputenc}

\newtheorem{theorem}{Theorem}[section]
\newtheorem{lemma}[theorem]{Lemma}
\newtheorem{proposition}[theorem]{Proposition}

\theoremstyle{definition}

\newtheorem{definition}[theorem]{Definition}

\newtheorem{remark}[theorem]{Remark}

\numberwithin{equation}{section}

\begin{document}

\title{Differential Complexes in Time-Periodic Gelfand--Shilov Spaces}
	

\author[F. de \'Avila Silva]{Fernando de \'Avila Silva}
\address{
	Departamento de Matem\'atica 
	Universidade Federal do Paran\'a  
	CP 19096, CEP 81531-990, Curitiba 
	Brasil}
\email{fernando.avila@ufpr.br}
	
\author[M. Cappiello]{Marco Cappiello}
\address{Dipartimento di Matematica ``Giuseppe Peano'',
	Università degli Studi di Torino,
	Via Carlo Alberto 10, 10123 Torino,
	Italia
	}
\email{marco.cappiello@unito.it}
	
\author[A. Kirilov]{Alexandre Kirilov}
\address{
	Departamento de Matem\'atica 
	Universidade Federal do Paran\'a  
	CP 19096, CEP 81531-990, Curitiba 
	Brasil}
\email{akirilov@ufpr.br}
	
\author[P. M. Tokoro]{Pedro Meyer Tokoro}
\address{
	Programa de P\'os-Gradua\c c\~ao em Matem\'atica 
	Universidade Federal do Paran\'a  
	CP 19096, CEP 81531-990, Curitiba 
	Brasil}
\email{pedro.tokoro@ufpr.br}

\thanks{The first and third authors thank the support provided by the National Council for Scientific and Technological Development - CNPq, Brazil (grants 316850/2021-7 and 402159/2022-5). This study was financed in part by Capes - Brasil (Finance Code 001). The second author is supported by the Italian Ministry of the University and Research - MUR,	within the PRIN 2022 Call (Project Code 2022HCLAZ8, CUP D53C24003370006).}

\subjclass{Primary 35B10, 58J10 Secondary 58A10, 46F05}

\keywords{Global solvability, differential complexes, Gelfand-Shilov spaces, Time-periodic evolution equations, Diophantine-type spectral conditions}

	\maketitle 

	\begin{abstract}
		We study the global solvability of a class of differential complexes on the product manifold $\mathbb{T}^m \times \mathbb{R}^n$ associated with systems of 	evolution operators of the form \(L_r = \partial_{t_r} + ia_r(t)P(x,D_x), r=1,\ldots,m,\) where the coefficients $a_r$ are real-valued Gevrey functions on the torus and $P(x,D_x)$ is a globally elliptic normal differential operator on $\mathbb{R}^n$. Within the framework of time-periodic Gelfand--Shilov spaces, we introduce a natural differential complex generated by these operators and investigate its solvability in both functional and ultradistributional settings.
		
		We provide a complete characterization of global solvability in terms of a Diophantine condition involving the constant part of the associated $1$-form and the spectrum of $P$. We also analyze global hypoellipticity of the complex. These results extend previous works on scalar operators and constant coefficient systems to the setting of differential complexes with time-dependent real coefficients.
	\end{abstract}

\section{Introduction}

The study of global hypoellipticity and global solvability for evolution equations and systems on compact manifolds has attracted considerable  attention in recent years, particularly in connection with arithmetic conditions on the coefficients and the spectral properties of the
underlying operators. In this paper, we investigate these properties for a class of differential
complexes on the product manifold $\mathbb{T}^m \times \mathbb{R}^n$, generated by first-order evolution
operators of the form
\begin{equation}\label{generalsystem}	
	L_r = \partial_{t_r} + ia_r(t)P(x,D_x),
	\qquad r=1,\ldots,m,
\end{equation}
where $t=(t_1,\ldots,t_m)\in\mathbb{T}^m$ and $x\in\mathbb{R}^n$.

Throughout the paper, the coefficients $a_r$ are assumed to be real-valued
functions belonging to a Gevrey class of order $\sigma>1$ on the torus
$\mathbb{T}^m$.
We further assume that the $1$-form
\[
a(t)=\sum_{r=1}^m a_r(t)\mathrm{d}t_r
\]
is closed.
Under this assumption, $a$ admits the decomposition
\[
a(t)=a_0+\mathrm{d}_t A(t),
\]
where $a_0=\sum_{r=1}^m a_{r,0}\mathrm{d}t_r$ is a constant $1$-form, whose
coefficients are given by
\begin{equation*}
	a_{r,0}=\frac{1}{2\pi}\int_0^{2\pi} a_r(0,\ldots,t_r,\ldots,0)\mathrm{d}t_r,
\end{equation*}
$\mathrm{d}_t$ denotes the exterior derivative on $\mathbb{T}^m$, and
$A\in\mathcal{G}^\sigma(\mathbb{T}^m)$ is a real-valued Gevrey function.

The operator $P(x,D_x)$ in \eqref{generalsystem} is assumed to be a normal
differential operator of the form
\begin{equation}\label{P-intro}
	P(x,D_x)=\sum_{|\alpha|+|\beta|\le M}
	c_{\alpha,\beta} x^\beta \partial_x^\alpha,
	\qquad c_{\alpha,\beta}\in\mathbb{C},
\end{equation}
of order $M\ge2$, satisfying the global ellipticity condition
\begin{equation}\label{P-elliptic}
	p_M(x,\xi)=\sum_{|\alpha|+|\beta|=M}
	c_{\alpha,\beta}\,x^\beta\,\xi^\alpha\neq0,
	\qquad (x,\xi)\neq(0,0).
\end{equation}

Under these assumptions, $P$ has a discrete real spectrum given by a sequence of eigenvalues
$\{\lambda_j\}_{j\in\mathbb{N}}$, with $|\lambda_j|\to\infty$ as
$j\to\infty$, and exhibits the asymptotic behavior
$|\lambda_j|\sim j^{M/(2n)}$.
Moreover, the associated eigenfunctions $\{\varphi_j\}$ form an orthonormal
basis of $L^2(\mathbb{R}^n)$ and belong to the Gelfand--Shilov space
$\mathcal{S}^{1/2}_{1/2}(\mathbb{R}^n)$.

Gelfand--Shilov spaces $\mathcal{S}^\mu_\nu(\mathbb{R}^n)$ provide a natural
functional framework for the analysis of equations involving globally
elliptic operators such as \eqref{P-intro}--\eqref{P-elliptic}
(see, e.g., \cite{CGR1,CGR2,Nicola-Rodino}).
Motivated by the study of evolution equations with periodic time
dependence, we work instead with their time-periodic counterparts,
introduced in \cite{AviCap22,AviCapKir25}.
For fixed $\sigma>1$ and $\mu\ge1/2$, we denote by
$\mathcal{S}_{\sigma,\mu,C}$ the Banach space of all smooth functions
$u$ on $\mathbb{T}^m\times\mathbb{R}^n$ satisfying
\begin{equation}\label{firstnorm}
	|u|_{\sigma,\mu,C}
	=
	\sup_{\alpha,\beta\in\mathbb{N}_0^n}
	\sup_{\gamma\in\mathbb{N}_0^m}
	C^{-|\alpha+\beta|-|\gamma|}
	\gamma!^{-\sigma}(\alpha!\beta!)^{-\mu}
	\sup_{(t,x)}
	|x^\alpha\partial_x^\beta\partial_t^\gamma u(t,x)|
	<\infty,
\end{equation}
or equivalently,
\begin{equation}\label{secondnorm}
 	\|u\|_{\sigma,\mu,C}:=\sup_{\gamma \in \mathbb{N}^m, M \in \mathbb{N}} C^{-M - |\gamma|} M!^{-m\mu} \gamma!^{-\sigma} \| P^M \partial_t^\gamma u \|_{L^2(\mathbb{T}^m \times \mathbb{R}^n)}<\infty,
\end{equation}
cf. \cite[Remark 2.1]{AviCapKir26}.
The norms $|u|_{\sigma,\mu,C}$ and $\|u\|_{\sigma,\mu,C}$ define equivalent topologies on $\mathcal{S}_{\sigma,\mu,C}$. We then set
$\mathcal{S}_{\sigma,\mu}=\bigcup_{C>0}\mathcal{S}_{\sigma,\mu,C}$
and define
\[
\mathscr{F}_\mu=\bigcup_{\sigma>1}\mathcal{S}_{\sigma,\mu},
\]
endowed with its natural inductive limit topology.
We denote by $\mathscr{F}_\mu'$ its strong dual.

The global hypoellipticity and solvability in $\mathscr{F}_\mu$ of the scalar
operator
\[
L=\partial_t+ia(t)P(x,D_x)
\]
have been completely characterized in
\cite{AviCap22,AviCap24}, while the case of systems with constant coefficients
was treated in \cite{AviCapKir25}.
More recently, variable-coefficient decoupled systems were studied in
\cite{AviCapKir26}.
The present work extends these results to the case of real-valued,
time-dependent coefficients within the framework of differential complexes.

To this end, for each $p=0,\ldots,m-1$, we consider the spaces
\[
\mathscr{F}_{\mu,p}
=\textstyle\bigwedge^{p,0}\mathscr{F}_\mu(\mathbb{T}^m\times\mathbb{R}^n),
\]
together with their ultradistributional counterparts
$\mathscr{F}'_{\mu,p}$.
We define the operators
\[
\mathbb{L}^p u
=\mathrm{d}_t u + ia(t)\wedge P(x,D_x)u,
\]
which satisfy $\mathbb{L}^{p+1}\circ\mathbb{L}^p=0$ and therefore generate
the differential complex
\[
0\to\mathscr{F}_{\mu,0}\xrightarrow{\mathbb{L}^0}
\mathscr{F}_{\mu,1}\xrightarrow{\mathbb{L}^1}\cdots
\xrightarrow{\mathbb{L}^{m-1}}\mathscr{F}_{\mu,m}\to0.
\]

The main goal of this paper is to characterize the global solvability and hypoellipticity of this complex. We show that solvability is governed by Diophantine conditions on the constant part $a_0$, while global hypoellipticity exhibits a sharp contrast between the scalar case $p=0$ and higher degrees $p\ge 1$. The present work is inspired by the seminal contributions of F.~Tr\`eves \cite{Tre76,Treves-book}, the book by S.~Berhanu, P.~Cordaro, and J.~Hounie \cite{BerhCorHou-book}, as well as several works devoted to the periodic case \cite{BerCorMal93,BerCorPet04,BerPet99jmaa}, among many others.

The paper is organized as follows.
In Section~\ref{sec-preliminaries} we recall the necessary background on
Fourier analysis in time-periodic Gelfand--Shilov spaces and introduce the
notions of global solvability and hypoellipticity for differential
complexes.
In Section~\ref{sec-solvability} we obtain a complete characterization of
global solvability (Theorem~\ref{thm_GS}), and in
Section~\ref{sec-hypoellipticity} we study global hypoellipticity,
establishing Theorems~\ref{hypo0} and~\ref{hypo>0}.

\section{Preliminaries} \label{sec-preliminaries}

We begin by recalling the characterization of Gevrey functions and ultradistributions on the torus $\mathbb{T}^m$ in terms of their Fourier coefficients.

For $h > 0$ and $\sigma \geq 1$, we denote by $\mathcal{G}^{\sigma,h}(\mathbb{T}^m)$ the Banach space of all smooth functions $\varphi \in C^\infty(\mathbb{T}^m)$ satisfying
\begin{equation*}
	\sup_{t \in \mathbb{T}^m} |\partial^\gamma \varphi(t)| \leq C h^{|\gamma|} (\gamma!)^{\sigma}, \quad \text{for all } \gamma \in \mathbb{N}_0^m.
\end{equation*}
for some constant $C > 0$. The corresponding norm is given by
\begin{equation*}
	\|\varphi\|_{\sigma,h} \coloneqq \sup_{\gamma \in \mathbb{N}_0^m} \left\{ \sup_{t \in \mathbb{T}^m} |\partial^\gamma \varphi(t)| \, h^{-|\gamma|} (\gamma!)^{-\sigma} \right\}.
\end{equation*}

The space of periodic Gevrey functions of order $\sigma$ is then defined as the inductive limit
\[
\mathcal{G}^{\sigma}(\mathbb{T}^m) =  \underset{h\rightarrow +\infty}{\operatorname{ind} \lim} \ \mathcal{G}^{\sigma, h} (\mathbb{T}^m),
\]
and its topological dual is denoted by $(\mathcal{G}^{\sigma})'(\mathbb{T}^m)$.

The Fourier coefficients of a Gevrey function $f \in \mathcal{G}^{\sigma}(\mathbb{T}^m)$ or of an ultradistribution $f \in (\mathcal{G}^{\sigma})'(\mathbb{T}^m)$ are given by
\[
\widehat{f}(\tau) \coloneqq \frac{1}{(2\pi)^m} \int_{\mathbb{T}^m} f(t) e^{-i \tau \cdot t}  \mathrm{d}t, \quad \tau \in \mathbb{Z}^m,
\]
and, in the distributional sense,
\[
\widehat{f}(\tau) \coloneqq \langle f, e^{-i \tau \cdot t} \rangle, \quad \tau \in \mathbb{Z}^m.
\]

Consequently, any such $f$ admits a Fourier series representation:
\[
f(t) = \sum_{\tau \in \mathbb{Z}^m} \widehat{f}(\tau) e^{i \tau \cdot t}.
\]

\subsection{Eigenfunction Expansions for Time-Periodic Gelfand–Shilov Spaces} \

We now recall the Fourier analysis developed in
\cite{AviCap22,AviCapKir25} for time-periodic Gelfand--Shilov spaces.
 To this end, we review the characterization of the spaces $\mathcal{S}_{\sigma, \mu}$ and $\mathcal{S}'_{\sigma, \mu}$ in terms of eigenfunction expansions.

Let \(\varphi_j \in \mathcal{S}_{1/2}^{1/2}(\mathbb{R}^n)\), \(j \in \mathbb{N}\), be the eigenfunctions of the operator $P$ defined in \eqref{P-intro}. Suppose $u \in \mathcal{S}'_{\sigma, \mu}(\mathbb{T}^m \times \mathbb{R}^n)$. Then, for each $j \in \mathbb{N}$, the linear functional
\begin{equation*}
	\langle u_j(t), \psi(t) \rangle \coloneqq \langle u, \psi(t)\varphi_j(x) \rangle
\end{equation*}
defines an element $u_j \in (\mathcal{G}^\sigma)'(\mathbb{T}^m)$.

Moreover, for every $\varepsilon > 0$ and $h > 0$, there exists a constant $C_{\varepsilon,h} > 0$ such that
\begin{equation*}
	|\langle u_j, \psi \rangle| \leq C_{\varepsilon,h} \| \psi \|_{\sigma, h} \exp\left( \varepsilon j^{\frac{1}{2n\mu}} \right),
\end{equation*}
for all $j \in \mathbb{N}$ and $\psi \in \mathcal{G}^{\sigma,h}(\mathbb{T}^m)$.

As a consequence, $u$ admits the representation
\[
\langle u, \theta \rangle = \sum_{j \in \mathbb{N}} \langle u_j(t)\varphi_j(x), \theta \rangle,
\]
where the pairing is defined by
\[
\langle u_j(t)\varphi_j(x), \theta(t,x) \rangle \coloneqq 
\left\langle u_j(t), \int_{\mathbb{R}^n} \theta(t,x)\varphi_j(x) \, \mathrm{d}x \right\rangle.
\]

Conversely, assume that a sequence $\{u_j\}_{j \in \mathbb{N}}$ of elements of
$(\mathcal{G}^\sigma)'(\mathbb{T}^m)$ satisfies the following condition:
for every $\varepsilon > 0$ and $h > 0$, there exists $C_{\varepsilon,h} > 0$
such that
\begin{equation}\label{estimate-distr}
	|\langle u_j, \psi \rangle| \leq C_{\varepsilon,h} \| \psi \|_{\sigma,h}
	\exp\left( \varepsilon j^{\frac{1}{2n\mu}} \right),
\end{equation}
for all $j \in \mathbb{N}$ and $\psi \in \mathcal{G}^{\sigma,h}(\mathbb{T}^m)$.

Then, the formal series
\[
u(t,x) = \sum_{j \in \mathbb{N}} u_j(t)\varphi_j(x)
\]
defines an element of $\mathcal{S}'_{\sigma,\mu}(\mathbb{T}^m \times \mathbb{R}^n)$, and the coefficients satisfy the identity
\[
\langle u_j, \psi(t) \rangle = \langle u, \psi(t)\varphi_j(x) \rangle,
\]
for every \( \psi \in \mathcal{G}^\sigma(\mathbb{T}^m).\)

In particular, if each $u_j$ belongs to $\mathcal{G}^\sigma(\mathbb{T}^m)$, the condition \eqref{estimate-distr} can be replaced by
\begin{equation*}
	\sup_{t \in \mathbb{T}^m} |u_j(t)| \leq C_\varepsilon \exp\left( \varepsilon j^{\frac{1}{2n\mu}} \right),
\end{equation*}
for all \(j \in \mathbb{N}.\)

\begin{proposition}\label{Thm-estimate-derivatives-functions}
	Let $\mu \geq 1/2$ and $\sigma \geq 1$. Then a distribution $u \in \mathcal{S}'_{\sigma,\mu}(\mathbb{T}^m \times \mathbb{R}^n)$ belongs to $\mathcal{S}_{\sigma,\mu}(\mathbb{T}^m \times \mathbb{R}^n)$ if and only if it admits a decomposition
	\[
	u(t,x) = \sum_{j \in \mathbb{N}} u_j(t) \varphi_j(x),
	\]
	where
	\[
	u_j(t) = \int_{\mathbb{R}^n} u(t,x)\varphi_j(x) \, \mathrm{d}x,
	\]
	and there exist constants $C > 0$ and $\varepsilon > 0$ such that
	\begin{equation} \label{deccoeff}
		\sup_{t \in \mathbb{T}^m} | \partial_t^\gamma u_j(t) | \leq 
		C^{|\gamma| + 1} (\gamma!)^{\sigma} \exp\left( -\varepsilon j^{\frac{1}{2n\mu}} \right),
	\end{equation}
	for all \(j \in \mathbb{N}\) and \( \gamma \in \mathbb{N}_0^m.\)
\end{proposition}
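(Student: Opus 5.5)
The plan is to work throughout with the equivalent norm \eqref{secondnorm}, and to use that $P^M\varphi_j=\lambda_j^M\varphi_j$ with $|\lambda_j|\sim j^{M_0/(2n)}$. Here we write $M_0\ge 2$ for the order of $P$, to avoid a clash with the power $M$ in \eqref{secondnorm}. Since $\{\varphi_j\}$ is an orthonormal basis of $L^2(\mathbb{R}^n)$ and $P$ is normal, Parseval in $x$ gives, for each $t$,
\[
\|P^N\partial_t^\gamma u(t,\cdot)\|^2_{L^2(\mathbb{R}^n)}=\sum_j |\lambda_j|^{2N}|\partial_t^\gamma u_j(t)|^2 ,
\]
with $u_j(t)=\int u(t,x)\varphi_j(x)\,dx$. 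Differentiation in $t$ commutes with this integral because $u$ is smooth and rapidly decaying. So both directions reduce to converting between factorial growth in $N$ and exponential decay in $j$, via the elementary estimate
\[
\inf_N \frac{C^N N!^{m\mu}}{|\lambda_j|^N}\asymp \exp\bigl(-\varepsilon j^{1/(2n\mu)}\bigr)
\]
(up to adjusting constants). The exponent $m\mu$ in \eqref{secondnorm} must be read as the order $M_0$ of $P$, as in \cite{AviCapKir26}. Then $N!^{M_0\mu}/|\lambda_j|^N\approx (N^{M_0\mu}/j^{M_0/(2n)})^N$, and optimizing at $N\approx \delta j^{1/(2n\mu)}$ gives $e^{-\varepsilon j^{1/(2n\mu)}}$.

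\emph{Necessity.} Let $u\in\mathcal{S}_{\sigma,\mu,C}$. Then $\|P^N\partial_t^\gamma u\|_{L^2(\mathbb{T}^m\times\mathbb{R}^n)}\le C'C^{N+|\gamma|}N!^{M_0\mu}\gamma!^\sigma$. Parseval gives an $L^2(\mathbb{T}^m)$ bound
\[
\|\partial_t^\gamma u_j\|_{L^2(\mathbb{T}^m)}\le C'C^{N+|\gamma|}N!^{M_0\mu}\gamma!^\sigma |\lambda_j|^{-N}
\]
for every $N$. Taking the infimum over $N$ yields the factor $\exp(-\varepsilon j^{1/(2n\mu)})$. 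To pass from $L^2(\mathbb{T}^m)$ to the supremum in $t$, apply Sobolev embedding on $\mathbb{T}^m$ to $\partial_t^{\gamma+\beta}u_j$ with $|\beta|\le m$. The extra derivatives only change $C$, since $(\gamma+\beta)!^\sigma\le 2^{\sigma(|\gamma|+m)}\gamma!^\sigma m!^\sigma$. This gives \eqref{deccoeff}.

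\emph{Sufficiency.} Assume \eqref{deccoeff}. The series $\sum_j u_j\varphi_j$ converges in $L^2$ together with all $t$-derivatives, and by the expansion theory recalled above it coincides with $u$. Compute
\[
\|P^N\partial_t^\gamma u\|^2_{L^2}\le (2\pi)^m\sum_j |\lambda_j|^{2N}C^{2|\gamma|+2}\gamma!^{2\sigma}e^{-2\varepsilon j^{1/(2n\mu)}} .
\]
Split $e^{-2\varepsilon j^{1/(2n\mu)}}=e^{-\varepsilon j^{1/(2n\mu)}}e^{-\varepsilon j^{1/(2n\mu)}}$. Bound
\[
|\lambda_j|^{N}e^{-\varepsilon j^{1/(2n\mu)}}\le c\,(C'')^N (j^{1/(2n\mu)})^{M_0\mu N}e^{-\varepsilon j^{1/(2n\mu)}}\le \tilde C^{N+1}N!^{M_0\mu},
\]
using $s^k e^{-\varepsilon s}\le (k/\varepsilon)^k\le \varepsilon^{-k}e^k k!$ with $k=M_0\mu N$, and then $(M_0\mu N)!\le C^N N!^{M_0\mu}$ (for noninteger $k$ use the Gamma function). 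The remaining $\sum_j e^{-\varepsilon j^{1/(2n\mu)}}$ is finite. Hence $\|u\|_{\sigma,\mu,C_1}<\infty$ for some $C_1$, so $u\in\mathcal{S}_{\sigma,\mu}$.

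The main obstacle is the bookkeeping in the two-sided estimate linking $\inf_N C^NN!^{M_0\mu}|\lambda_j|^{-N}$ to $e^{-\varepsilon j^{1/(2n\mu)}}$. One must use only the lower and upper bounds $c\,j^{M_0/(2n)}\le|\lambda_j|\le c'j^{M_0/(2n)}$ (for large $j$; finitely many small $j$, including possible zero eigenvalues, are absorbed into constants). One must also track that constants change but the exponent $1/(2n\mu)$ is preserved. The case $j$ with $\lambda_j=0$ in the necessity direction is harmless, since only finitely many such $j$ exist and $N=0$ gives a bound for them.
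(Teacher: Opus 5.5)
Your argument is correct, and it is essentially the standard proof of the result the paper only cites, \cite[Theorem 2.4]{AviCap22}: Parseval in $x$ using normality of $P$, the two-sided bound $|\lambda_j|\asymp j^{M_0/(2n)}$, optimization over the power $N$ of $P$ at $N\approx\delta j^{1/(2n\mu)}$, and Sobolev embedding on $\mathbb{T}^m$ to reach the supremum in $t$; you are also right that the exponent in \eqref{secondnorm} must be the order of $P$, not the torus dimension $m$. The only slip is bookkeeping in the sufficiency step: after splitting $e^{-2\varepsilon j^{1/(2n\mu)}}$, the quantity to bound is $|\lambda_j|^{2N}e^{-\varepsilon j^{1/(2n\mu)}}$ (or $(|\lambda_j|^{N}e^{-\varepsilon j^{1/(2n\mu)}/2})^2$), not $|\lambda_j|^{N}e^{-\varepsilon j^{1/(2n\mu)}}$, and this only changes constants since $(2N)!\le 4^N N!^2$.
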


\begin{proof}
	See \cite[Theorem 2.4]{AviCap22}.
\end{proof}

Similarly, we obtain a characterization of time-periodic Gelfand–Shilov functions and ultradistributions in terms of their full Fourier coefficients.

\begin{proposition}\label{charac_full_fourier-functions}
	Let $\{a(\tau,j)\}_{(\tau,j) \in \mathbb{Z}^m \times \mathbb{N}}$ be a sequence of complex numbers, and consider the formal series
	\[
	f(t,x) = \sum_{j \in \mathbb{N}} \sum_{\tau \in \mathbb{Z}^m} a(\tau,j) \, e^{i t \cdot \tau} \varphi_j(x), \quad (t,x) \in \mathbb{T}^m \times \mathbb{R}^n.
	\]
	Then $f \in \mathcal{S}_{\sigma,\mu}$ if and only if there exist constants $\varepsilon > 0$ and $C > 0$ such that
	\begin{equation}\label{seq-full-coeff-funct}
		|a(\tau,j)| \leq C 
		\exp\left[-\varepsilon \left(\|\tau\|^{\frac{1}{\sigma}} + j^{\frac{1}{2n\mu}}\right)\right], \quad  (\tau,j) \in \mathbb{Z}^m \times \mathbb{N}.
	\end{equation}
	
	Moreover, under this condition, the Fourier coefficients satisfy $\widehat{a_j}(\tau) = a(\tau,j)$ for all $(\tau,j)$, where
	\[
	\widehat{a_j}(\tau) = \int_{\mathbb{T}^m} a_j(t) e^{-i t \cdot \tau} \, \mathrm{d}t, \quad \text{with} \quad a_j(t) = \int_{\mathbb{R}^n} a(t,x) \varphi_j(x) \, \mathrm{d}x.
	\]
\end{proposition}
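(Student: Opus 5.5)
The plan is to reduce to Proposition~\ref{Thm-estimate-derivatives-functions} together with the classical Fourier characterization of Gevrey functions on $\mathbb{T}^m$. A smooth $g$ belongs to $\mathcal{G}^\sigma(\mathbb{T}^m)$ with bounds $\sup_t|\partial^\gamma g|\le C^{|\gamma|+1}\gamma!^\sigma$ if and only if $|\widehat g(\tau)|\le C' e^{-\delta\|\tau\|^{1/\sigma}}$. The task is to make this equivalence \emph{uniform in $j$}, carrying the extra factor $\exp(-\varepsilon j^{1/(2n\mu)})$ through both directions.

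\textbf{Necessity.} Suppose $f\in\mathcal{S}_{\sigma,\mu}$. Proposition~\ref{Thm-estimate-derivatives-functions} gives coefficients $a_j(t)$ satisfying \eqref{deccoeff}. Integrating by parts $N$ times in the direction $t_k$ for which $|\tau_k|=\max_k|\tau_k|\ge\|\tau\|/\sqrt m$, we get
\[
|\widehat{a_j}(\tau)|\le C^{N+1}N!^\sigma\,(\|\tau\|/\sqrt m)^{-N}e^{-\varepsilon j^{1/(2n\mu)}}.
\]
We then take the infimum over $N$, using $\inf_N C^N N!^\sigma r^{-N}\le C''e^{-\delta r^{1/\sigma}}$. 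This gives $|\widehat{a_j}(\tau)|\le C e^{-\delta\|\tau\|^{1/\sigma}-\varepsilon j^{1/(2n\mu)}}$, which is \eqref{seq-full-coeff-funct} with $\min(\delta,\varepsilon)$. Since the expansion of $a_j$ in $e^{it\cdot\tau}$ is unique, $a(\tau,j)=\widehat{a_j}(\tau)$, which is the final identity of the statement. (The normalization constant $(2\pi)^{-m}$ must be matched to the convention in the statement; this is routine.)

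\textbf{Sufficiency.} Assume \eqref{seq-full-coeff-funct} and set $a_j(t)=\sum_\tau a(\tau,j)e^{it\cdot\tau}$. Differentiating term by term, which is justified by absolute convergence, gives
\[
|\partial^\gamma a_j(t)|\le C e^{-\varepsilon j^{1/(2n\mu)}}\sum_\tau \|\tau\|^{|\gamma|}e^{-\varepsilon\|\tau\|^{1/\sigma}}.
\]
Split $e^{-\varepsilon\|\tau\|^{1/\sigma}}=e^{-\frac\varepsilon2\|\tau\|^{1/\sigma}}e^{-\frac\varepsilon2\|\tau\|^{1/\sigma}}$. The elementary bound $r^{k}e^{-\frac\varepsilon2 r^{1/\sigma}}\le (2\sigma/\varepsilon)^{\sigma k}k!^\sigma$ handles one factor, and the remaining factor $e^{-\frac\varepsilon2\|\tau\|^{1/\sigma}}$ is summable over $\mathbb{Z}^m$. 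Using $|\gamma|!\le m^{|\gamma|}\gamma!$, this yields \eqref{deccoeff}.

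It remains to check that $f=\sum_j a_j\varphi_j$ is a priori an element of $\mathcal{S}'_{\sigma,\mu}$ with these coefficients. The bound $\sup|a_j|\le Ce^{-\varepsilon j^{1/(2n\mu)}}$ verifies the function version of \eqref{estimate-distr}, so the converse statement recalled above applies. It shows that $f\in\mathcal{S}'_{\sigma,\mu}$ and that its $j$-th coefficient is $a_j$. Proposition~\ref{Thm-estimate-derivatives-functions} then gives $f\in\mathcal{S}_{\sigma,\mu}$.

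The only real obstacle is the bookkeeping of constants: the Gevrey constant must not depend on $j$, and the exponent $\varepsilon$ may shrink but must stay positive. Both follow because the $j$-factor enters multiplicatively and independently of $\tau$ and $\gamma$ in each estimate. No interaction between the $t$ and $x$ variables arises, since the $x$-behaviour is encoded entirely in Proposition~\ref{Thm-estimate-derivatives-functions}.
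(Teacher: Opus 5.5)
Your proof is correct and follows essentially the paper's route. The paper's own justification is only the citation \cite[Theorem 2.4]{AviCapKir25}, and your argument is the natural one behind it: Proposition~\ref{Thm-estimate-derivatives-functions} combined with the Fourier characterization of Gevrey functions on $\mathbb{T}^m$, with constants kept uniform in $j$. You are also right that the statement's definition of $\widehat{a_j}(\tau)$ omits the factor $(2\pi)^{-m}$, so the final identity holds only up to that normalization.
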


\begin{proof}
	See \cite[Theorem 2.4]{AviCapKir25}.
\end{proof}

Likewise, a similar characterization holds for ultradistributions.

\begin{proposition}\label{charac_full_fourier_ultradistr}
	Let $\{a(\tau,j)\}_{(\tau,j) \in \mathbb{Z}^m \times \mathbb{N}}$ be a sequence of complex numbers, and consider the formal series
	\[
	u(t,x) = \sum_{j \in \mathbb{N}} \sum_{\tau \in \mathbb{Z}^m} a(\tau,j) \, e^{i t \cdot \tau} \varphi_j(x), \quad (t,x)\in\mathbb{T}^m\times\mathbb{R}^n.
	\]
	Then $u \in \mathcal{S}'_{\sigma,\mu}$ if and only if for every $\varepsilon > 0$ there exists $C_\varepsilon > 0$ such that
	\begin{equation}\label{seq-full-coeff-distr}
		|a(\tau,j)| \leq C_\varepsilon 
		\exp\left[\varepsilon \left(\|\tau\|^{\frac{1}{\sigma}} + j^{\frac{1}{2n\mu}}\right)\right], \quad  (\tau,j) \in \mathbb{Z}^m \times \mathbb{N}.
	\end{equation}
	
	Moreover, when this condition holds, we have $a(\tau,j) = \widehat{a_j}(\tau)$, where
	\[
	\langle a_j(t), \psi(t) \rangle \coloneqq \langle a, \psi(t) \varphi_j(x) \rangle, \quad \psi \in \mathcal{G}^\sigma(\mathbb{T}^m).
	\]
\end{proposition}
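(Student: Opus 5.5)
The plan is to reduce the statement to two facts already available: the partial-expansion characterization of $\mathcal{S}'_{\sigma,\mu}$ recalled before Proposition~\ref{Thm-estimate-derivatives-functions}, namely condition \eqref{estimate-distr}, and the classical Fourier characterization of periodic Gevrey ultradistributions. The latter says that $v\in(\mathcal{G}^\sigma)'(\mathbb{T}^m)$ if and only if for every $\delta>0$ one has $|\widehat v(\tau)|\le C_\delta e^{\delta\|\tau\|^{1/\sigma}}$. The argument is a duality computation. The bound \eqref{seq-full-coeff-distr} is the natural dual of \eqref{seq-full-coeff-funct}, so I would test $u$ against the functions characterized in Proposition~\ref{charac_full_fourier-functions}.

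\emph{Necessity.} Let $u\in\mathcal{S}'_{\sigma,\mu}$ and set $a(\tau,j)=\widehat{u_j}(\tau)=\langle u, e^{-i\tau\cdot t}\varphi_j(x)\rangle$. By continuity of $u$ on the inductive limit $\mathcal{S}_{\sigma,\mu}=\bigcup_C\mathcal{S}_{\sigma,\mu,C}$, for every $C>0$ there is $K_C$ such that $|\langle u,\theta\rangle|\le K_C\|\theta\|_{\sigma,\mu,C}$. Take $\theta=e^{-i\tau\cdot t}\varphi_j$ and use the norm \eqref{secondnorm}:
\[
\|\theta\|_{\sigma,\mu,C}=\sup_{\gamma,M}C^{-M-|\gamma|}M!^{-m\mu}\gamma!^{-\sigma}|\tau^\gamma|\,|\lambda_j|^M\,(2\pi)^{m/2}.
\]
Each supremum factorizes into a $t$-part and an $x$-part. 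For the $t$-part, the estimate $\sup_\gamma C^{-|\gamma|}\gamma!^{-\sigma}|\tau^\gamma|\lesssim \exp(c\,\sigma C^{-1/\sigma}\|\tau\|^{1/\sigma})$ follows from Stirling's formula. For the $x$-part, $\sup_M C^{-M}M!^{-m\mu}|\lambda_j|^M\lesssim \exp(c' C^{-1/(m\mu)}|\lambda_j|^{1/(m\mu)})$. Combined with $|\lambda_j|\sim j^{M_0/(2n)}$, this gives a bound of the form $\exp(\varepsilon j^{1/(2n\mu)})$ once $C$ is large. Here I have to reconcile the exponents in the $P^M$-norm convention with $j^{1/(2n\mu)}$. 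I would not redo this bookkeeping: the remark cited (\cite[Remark 2.1]{AviCapKir26}) guarantees equivalence with $|\cdot|_{\sigma,\mu,C}$, and the statement of Proposition~\ref{charac_full_fourier-functions} already encodes exactly this conversion. Letting $C\to\infty$ makes $\varepsilon$ arbitrarily small, which gives \eqref{seq-full-coeff-distr}.

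\emph{Sufficiency.} Suppose \eqref{seq-full-coeff-distr} holds. Define $a_j=\sum_\tau a(\tau,j)e^{i\tau\cdot t}$. By the Gevrey Fourier characterization, each $a_j\in(\mathcal{G}^\sigma)'(\mathbb{T}^m)$. To verify \eqref{estimate-distr}, fix $\varepsilon,h$ and $\psi\in\mathcal{G}^{\sigma,h}(\mathbb{T}^m)$. Integrating by parts gives $|\widehat\psi(-\tau)|\le C'\|\psi\|_{\sigma,h}e^{-\delta_h\|\tau\|^{1/\sigma}}$, where $\delta_h>0$ depends only on $h$ and $\sigma$. Then
\[
|\langle a_j,\psi\rangle|\le\sum_\tau|a(\tau,j)||\widehat\psi(-\tau)|\le C_{\varepsilon'}C'\|\psi\|_{\sigma,h}e^{\varepsilon' j^{1/(2n\mu)}}\sum_\tau e^{(\varepsilon'-\delta_h)\|\tau\|^{1/\sigma}}.
\]
Choose $\varepsilon'<\min(\varepsilon,\delta_h/2)$. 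The last series then converges, and \eqref{estimate-distr} follows. The converse statement recalled earlier yields $u=\sum_j a_j\varphi_j\in\mathcal{S}'_{\sigma,\mu}$ with $\langle a_j,\psi\rangle=\langle u,\psi\varphi_j\rangle$. Taking $\psi=e^{-i\tau\cdot t}$ gives $\widehat{a_j}(\tau)=a(\tau,j)$, which proves the ``moreover'' part.

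The main obstacle is the necessity step: making the constant bookkeeping uniform so that $\varepsilon$ really can be taken arbitrarily small in both variables simultaneously. The subtle point is that the dual bound must hold for every $\varepsilon$, which requires using continuity on every step $\mathcal{S}_{\sigma,\mu,C}$ of the inductive limit, not just one. If the factorized sup computation via the $P^M$ norm proves awkward, I would instead invoke Proposition~\ref{charac_full_fourier-functions} quantitatively. Its proof shows that functions whose coefficients satisfy \eqref{seq-full-coeff-funct} with a given $\varepsilon$ lie in a fixed $\mathcal{S}_{\sigma,\mu,C(\varepsilon)}$ with bounded norm. From this I would argue by duality, either with a closed graph argument or by testing against $\theta=\sum \overline{\mathrm{sgn}\,a(\tau,j)}\,e^{-\varepsilon(\|\tau\|^{1/\sigma}+j^{1/(2n\mu)})}e^{-i\tau\cdot t}\varphi_j$ restricted to finite sets, to bound $|a(\tau,j)|e^{-\varepsilon(\cdots)}$ uniformly.
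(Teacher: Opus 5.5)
Your proof is correct. The paper gives no argument of its own here: it only cites \cite[Theorem 2.5]{AviCapKir25}. Your proposal is therefore a self-contained route, reducing the full-coefficient statement to two ingredients. The first is the partial-expansion characterization of $\mathcal{S}'_{\sigma,\mu}$ recalled before Proposition~\ref{Thm-estimate-derivatives-functions}; the second is the Fourier characterization of $(\mathcal{G}^\sigma)'(\mathbb{T}^m)$. The sufficiency half is complete as written. The decay $|\widehat\psi(\tau)|\le C'\|\psi\|_{\sigma,h}e^{-\delta_h\|\tau\|^{1/\sigma}}$ with $\delta_h\asymp h^{-1/\sigma}$, the choice $\varepsilon'<\min(\varepsilon,\delta_h/2)$, and the recalled converse give membership and $\widehat{a_j}(\tau)=a(\tau,j)$ at once. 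The paper's citation buys brevity. Your route shows the statement is a formal consequence of that partial-expansion theorem plus Gevrey Fourier analysis on $\mathbb{T}^m$.

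The necessity half is easier than you fear. The exponent mismatch you noticed is a notational clash in \eqref{secondnorm}. There the exponent of $M!$ should be $M_0\mu$, with $M_0$ the order of $P$, not $m\mu$ with $m$ the torus dimension. This matches the standard criterion $\|P^N v\|_{L^2}\le C^{N+1}(N!)^{M_0\mu}$, and it is the only reading consistent with the decay $e^{-\varepsilon j^{1/(2n\mu)}}$ in Proposition~\ref{Thm-estimate-derivatives-functions}. Set $s=M_0\mu$. Your bound $\sup_N C^{-N}(N!)^{-s}|\lambda_j|^N\le\exp\bigl(sC^{-1/s}|\lambda_j|^{1/s}\bigr)$, together with $|\lambda_j|^{1/s}\asymp j^{1/(2n\mu)}$, then closes the argument exactly. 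So you need neither the appeal to \cite[Remark 2.1]{AviCapKir26} nor the duality fallback.

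You can also bypass the $P$-norm entirely and let both directions rest on the same recalled theorem. Its direct half gives, for every $\varepsilon,h>0$,
\[
|a(\tau,j)|=|\langle u_j,e^{-i\tau\cdot t}\rangle|\le C_{\varepsilon,h}\,\|e^{-i\tau\cdot t}\|_{\sigma,h}\,e^{\varepsilon j^{1/(2n\mu)}},
\qquad
\|e^{-i\tau\cdot t}\|_{\sigma,h}\le \exp\bigl(\sigma m h^{-1/\sigma}\|\tau\|^{1/\sigma}\bigr).
\]
The second bound uses $|\tau^\gamma|\le\|\tau\|^{|\gamma|}$ and $\gamma!\ge|\gamma|!\,m^{-|\gamma|}$. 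Taking $h$ large then yields \eqref{seq-full-coeff-distr}.
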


\begin{proof}
	See \cite[Theorem 2.5]{AviCapKir25}.
\end{proof}

Using these characterizations, we can describe the spaces $\mathscr{F}_{\mu,p}$ and $\mathscr{F}'_{\mu,p}$ for $p = 0, \ldots, m - 1$. Specifically, any $u \in \mathscr{F}_{\mu,p}$ can be written as
\[
u(t,x) = \sum_{|K|=p} u_K(t,x) \, \mathrm{d}t_K = \sum_{j \in \mathbb{N}} u_j(t) \varphi_j(x),
\]
where
\[
u_j(t) = \sum_{|K|=p} u_{K,j}(t) \, \mathrm{d}t_K \in \textstyle\bigwedge^p \mathcal{G}^\sigma(\mathbb{T}^m),
\]
and each $u_{K,j}(t) \in \mathcal{G}^\sigma(\mathbb{T}^m)$ denotes the coefficient in the Fourier–Hermite expansion of $u_K$. A similar decomposition holds for $u \in \mathscr{F}'_{\mu,p}$.

We can interpret $\mathscr{F}_{\mu,p}$ as the space of differential $p$-forms on $\mathbb{T}^m \times \mathbb{R}^n$ whose coefficients belong to the ultradifferentiable class $\mathscr{F}_\mu$. Accordingly, its strong dual $\mathscr{F}'_{\mu,p}$ is identified with the space of $p$-currents, that is, continuous linear functionals acting on compactly supported $p$-forms with coefficients in $\mathscr{F}_\mu$.

Furthermore, the full Fourier series can be written as
\[
u(t,x) = \sum_{j \in \mathbb{N}} \left( \sum_{\tau \in \mathbb{Z}^m} \widehat{u}_j(\tau) \, e^{i \tau \cdot t} \right) \varphi_j(x),
\]
where
\[
\widehat{u}_j(\tau) = \sum_{|K|=p} \widehat{u}_{K,j}(\tau) \, \mathrm{d}t_K,
\]
and $\widehat{u}_{K,j}(\tau)$ are the Fourier coefficients of $u_{K,j}(t)$.

With this structure, the equation $\mathbb{L}^p u = f$ can be reduced to a sequence of equations involving the Fourier coefficients of $u$ and $f$. Specifically, $\mathbb{L}^p u = f$ is equivalent to the family of equations
\[
d_t u_j(t) + i \lambda_j a(t) \wedge u_j(t) = f_j(t), \quad j \in \mathbb{N}.
\]

Moreover, associating the 1-form $a_0$ to the vector $\boldsymbol{a}_0 \coloneqq (a_{1,0}, \ldots, a_{m,0}) \in \mathbb{R}^m$, one can show that
\[
d_t\left[e^{i \lambda_j (\boldsymbol{a}_0 \cdot t + A(t))} u_j(t) \right] = e^{i \lambda_j (\boldsymbol{a}_0 \cdot t + A(t))} f_j(t),
\]
whenever $\lambda_j \boldsymbol{a}_0 \in \mathbb{Z}^m$. In such cases, the $(p+1)$-form $e^{i \lambda_j (\boldsymbol{a}_0 \cdot t + A(t))} f_j(t)$ is exact.

We now define the subspace of data for which solvability will be studied.

\begin{definition}
	Let $p = 0, \ldots, m - 1$. Define the space $\mathbb{E}_\mu^p \subset \mathscr{F}_{\mu,p+1}$ as the set of all $f \in \mathscr{F}_{\mu,p+1}$ such that:
	\[
	\mathbb{L}^{p+1} f = 0, \mbox{ and } e^{i \lambda_j (\boldsymbol{a}_0 \cdot t + A(t))} f_j(t) \mbox{ is exact whenever } \lambda_j \boldsymbol{a}_0 \in \mathbb{Z}^m.
	\]
\end{definition}

\begin{definition}
	Let $p = 0, \ldots, m - 1$. The operator $\mathbb{L}^p$ is said to be:
	\begin{itemize}
		\item Globally solvable if for every $f \in \mathbb{E}_\mu^p$ there exists $u \in \mathscr{F}_{\mu,p}$ such that $\mathbb{L}^p u = f$.
		
		\item Globally hypoelliptic if the conditions $u \in \mathscr{F}'_{\mu,p}$ and $\mathbb{L}^p u \in \mathscr{F}_{\mu,p+1}$ imply $u \in \mathscr{F}_{\mu,p}$.
	\end{itemize}
\end{definition}

\section{Reduction to Normal Form}\label{sec-normalformreduction}

To derive necessary and sufficient conditions for the global hypoellipticity and solvability of the differential complex, we begin by reducing the problem to an equivalent system whose coefficients are independent of the toroidal variable. As before, we write $a = a_0 + \mathrm{d}_t A \in \textstyle\bigwedge^1 \mathcal{G}^\sigma(\mathbb{T}^m)$, where $a_0$ is a constant $1$-form and $A$ is a real-valued Gevrey function.

\begin{theorem}\label{iso_A}
	Let $p = 0, \ldots, m - 1$. The map $\mathscr{T}_A^p$ defined by
	\[
	\sum_{j \in \mathbb{N}} u_j(t)\varphi_j(x) \mapsto \sum_{j \in \mathbb{N}} u_j(t)e^{i\lambda_j A(t)}\varphi_j(x)
	\]
	is a well-defined continuous linear topological isomorphism from $\mathscr{F}_{\mu,p}$ to $\mathscr{F}_{\mu,p}$ and from $\mathscr{F}'_{\mu,p}$ to $\mathscr{F}'_{\mu,p}$ with inverse $\mathscr{T}_{-A}^p$. Moreover, the following conjugation identity holds:
	\[
	\mathscr{T}_A^{p+1} \circ \mathbb{L}^p \circ \mathscr{T}_{-A}^p = \mathbb{L}_0^p,
	\]
	where
	\[
	\mathbb{L}_0^p u = \mathrm{d}_t u + i a_0 \wedge P(x,D_x) u.
	\]
\end{theorem}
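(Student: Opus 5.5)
The plan is to work entirely at the level of the eigenfunction coefficients and to use the characterizations in Proposition~\ref{Thm-estimate-derivatives-functions} and in the estimate \eqref{estimate-distr}. Since $\mathscr{T}_A^p$ acts coefficientwise on the forms $u_j(t)=\sum_{|K|=p}u_{K,j}(t)\,\mathrm{d}t_K$, it suffices to treat scalar components. Linearity is clear, and $\mathscr{T}_{-A}^p\circ\mathscr{T}_A^p=\mathrm{id}$ holds termwise. So the whole content is that multiplication of the $j$-th coefficient by $e^{i\lambda_j A(t)}$ preserves the decay/growth estimates, with control that is uniform in the right sense.

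\textbf{Function case.} Let $u\in\mathcal{S}_{\sigma,\mu}$ with $A\in\mathcal{G}^{\sigma_0}$. Replacing $\sigma$ by $\max(\sigma,\sigma_0)$ is harmless, since we work in $\mathscr{F}_\mu=\bigcup_\sigma\mathcal{S}_{\sigma,\mu}$. The key estimate is a Gevrey bound for $e^{i\lambda_j A}$. By Faà di Bruno, together with the fact that $A$ is real (so $|e^{i\lambda_jA}|=1$), I would show
\[
\sup_t|\partial_t^\gamma e^{i\lambda_j A(t)}|\le C_1^{|\gamma|+1}\gamma!^{\sigma}\exp\bigl(\delta|\lambda_j|^{1/\sigma}\bigr)
\]
for every $\delta>0$, with $C_1=C_1(\delta)$. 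The standard route is the bound $\sum_{k\le|\gamma|}(C|\lambda_j|)^k\gamma!^\sigma/k!^\sigma$ combined with $\frac{s^k}{k!^\sigma}\le \exp(\sigma s^{1/\sigma})$, applied after rescaling so that the constant in front of $|\lambda_j|^{1/\sigma}$ becomes small at the cost of enlarging $C_1$. Since $|\lambda_j|\sim j^{M/(2n)}$, we get $|\lambda_j|^{1/\sigma}\lesssim j^{M/(2n\sigma)}$.

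The main obstacle is to make this growth absorbable by $\exp(-\varepsilon j^{1/(2n\mu)})$. This requires $M/(2n\sigma)\le 1/(2n\mu)$, i.e.\ $\sigma\ge M\mu$, or else equality of exponents together with a small $\delta$. We are free to enlarge $\sigma$ inside $\mathscr{F}_\mu$, so I would choose $\sigma> M\mu$. In that case the growth is $\exp(o(j^{1/(2n\mu)}))$ and is absorbed, leaving decay $\exp(-\tfrac{\varepsilon}{2}j^{1/(2n\mu)})$. Leibniz' rule then combines $\partial^\gamma u_j$ and $\partial^\gamma e^{i\lambda_jA}$, which gives \eqref{deccoeff} for $u_j e^{i\lambda_jA}$ with constant $(2C)^{|\gamma|+1}$. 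Proposition~\ref{Thm-estimate-derivatives-functions} then shows that $\mathscr{T}^p_A u\in\mathcal{S}_{\sigma',\mu}\subset\mathscr{F}_\mu$. The constants depend only on those of $u$, so the map sends bounded sets of each step $\mathcal{S}_{\sigma,\mu,C}$ into a fixed step. This gives continuity on each Banach step and hence on the inductive limit.

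\textbf{Ultradistribution case.} Here I would argue by duality: $\langle u_je^{i\lambda_jA},\psi\rangle=\langle u_j,e^{i\lambda_jA}\psi\rangle$. The product estimate above gives $\|e^{i\lambda_jA}\psi\|_{\sigma,2h'}\le C_\delta\|\psi\|_{\sigma,h}\exp(\delta j^{1/(2n\mu)})$, again for $\sigma$ large. Plugging this into \eqref{estimate-distr} with $\varepsilon$ and $\delta$ arbitrary reproduces \eqref{estimate-distr} for the new coefficients. Equivalently, $\mathscr{T}^p_A$ on $\mathscr{F}'_{\mu,p}$ is the transpose of $\mathscr{T}^{p}_A$ on the form-dual side, because $\varphi_j$ is real orthonormal, and it is therefore continuous.

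\textbf{Conjugation identity.} I would compute this on the $j$-th coefficient, using $P\varphi_j=\lambda_j\varphi_j$. For $v=\mathscr{T}_{-A}^pu$ we have $v_j=e^{-i\lambda_jA}u_j$, and
\[
\mathrm{d}_tv_j+i\lambda_j a\wedge v_j=e^{-i\lambda_jA}\bigl(\mathrm{d}_tu_j-i\lambda_j\mathrm{d}_tA\wedge u_j+i\lambda_j(a_0+\mathrm{d}_tA)\wedge u_j\bigr)=e^{-i\lambda_jA}\bigl(\mathrm{d}_tu_j+i\lambda_ja_0\wedge u_j\bigr).
\]
Applying $\mathscr{T}_A^{p+1}$ cancels the exponential and yields the coefficients of $\mathbb{L}_0^pu$. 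This is legitimate termwise in $\mathscr{F}'$ because all the maps involved are continuous and act diagonally in $j$.
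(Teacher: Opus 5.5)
Your proposal is correct. Your verification of the conjugation identity is the same computation as in the paper.

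The difference is in the isomorphism claims. The paper does not prove them: it cites \cite[Proposition 3.2]{AviCapKir26}, and for $\mathscr{F}'_{\mu,p}$ it refers to \cite[Proposition 3.1]{AviCapKir26} together with the $P^M$-based norm \eqref{secondnorm}. You instead derive them directly from the coefficient characterizations (Proposition~\ref{Thm-estimate-derivatives-functions} and \eqref{estimate-distr}). The key tool is the Fa\`a di Bruno bound $\sup_t|\partial_t^\gamma e^{i\lambda_jA}|\le C_1(\delta)^{|\gamma|+1}\gamma!^\sigma e^{\delta|\lambda_j|^{1/\sigma}}$, and you treat the dual side by moving the multiplier onto the test function coefficientwise.

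The paper's route is shorter. Yours is self-contained and makes the loss in the Gevrey index in $t$ explicit: one needs $\sigma\ge\max(\sigma_0,M\mu)$ so that $|\lambda_j|^{1/\sigma}\lesssim j^{1/(2n\mu)}$. This explains why $\mathscr{T}_A^p$ acts on the union $\mathscr{F}_\mu$ rather than on a fixed $\mathcal{S}_{\sigma,\mu}$. It also explains why, on $\mathscr{F}'_\mu=\bigcap_\sigma\mathcal{S}'_{\sigma,\mu}$, it suffices to check large $\sigma$.

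Two points to tighten:
\begin{enumerate}
\item Continuity on the Banach steps requires a quantitative version of Proposition~\ref{Thm-estimate-derivatives-functions}: the coefficient constants must depend only on $C$, and the bounds must be linear in $|u|_{\sigma,\mu,C}$. Its proof provides this, but its statement does not.
\item Your alternative ``transpose'' remark assumes the $\varphi_j$ are real, which is not among the stated hypotheses; a normal $P$ with complex coefficients can have complex eigenfunctions. Your direct coefficientwise duality argument does not depend on this assumption, so nothing is lost.
\end{enumerate}
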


\begin{proof}
	We prove the conjugation identity; the other claims follow from \cite[Proposition 3.2]{AviCapKir26}. Let $u \in \mathscr{F}_{\mu,p}$, and write $\mathscr{T}_{-A}^p u = \sum_{j} u_j(t)e^{-i\lambda_j A(t)}\varphi_j(x)$. Then,
	\begin{align*}
		\mathbb{L}^p \left( \mathscr{T}_{-A}^p u \right) 
		&= \sum_{j \in \mathbb{N}} \left[ \mathrm{d}_t \left( u_j(t)e^{-i\lambda_j A(t)} \right) + i \lambda_j a(t) \wedge u_j(t)e^{-i\lambda_j A(t)} \right] \varphi_j(x) \\
		&= \sum_{j \in \mathbb{N}} \left[ \mathrm{d}_t u_j(t) - i \lambda_j \mathrm{d}_t A(t) \wedge u_j(t) + i \lambda_j a(t) \wedge u_j(t) \right] e^{-i\lambda_j A(t)} \varphi_j(x) \\
		&= \sum_{j \in \mathbb{N}} \left[ \mathrm{d}_t u_j(t) + i \lambda_j a_0 \wedge u_j(t) \right] e^{-i\lambda_j A(t)} \varphi_j(x) \\
		&= \mathscr{T}_{-A}^{p+1} \left( \mathbb{L}_0^p u \right).
	\end{align*}
	Multiplying on the left by $\mathscr{T}_A^{p+1}$ yields the desired identity. The extension of $\mathscr{T}_A^{p}$ on $\mathscr{F}'_{\mu,p}$ can be obtained as in \cite[Proposition 3.1]{AviCapKir26} using the characterization of time-periodic Gelfand-Shilov spaces in terms of the norm \eqref{secondnorm}.
\end{proof}

\begin{proposition}\label{equiv_L0}
	Let $p \in \{0, \ldots, m - 1\}$. Then 
the operator $\mathbb{L}^p$ is globally solvable (respectively, globally hypoelliptic) if and only if $\mathbb{L}_0^p$ is globally solvable (respectively, globally hypoelliptic).
\end{proposition}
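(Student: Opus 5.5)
The plan is to transport both properties through the isomorphisms of Theorem~\ref{iso_A}. Write $\mathscr{T}^p=\mathscr{T}^p_A$. The conjugation identity gives
\[
\mathbb{L}^p=\mathscr{T}_{-A}^{p+1}\circ\mathbb{L}_0^p\circ\mathscr{T}_A^p .
\]
Here $\mathscr{T}^p_{\pm A}$ are isomorphisms of both $\mathscr{F}_{\mu,p}$ and $\mathscr{F}'_{\mu,p}$. The identity extends to $\mathscr{F}'_{\mu,p}$ by the same coefficientwise computation, since the map acts on each $u_j$ separately.

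\emph{Global hypoellipticity.} Assume $\mathbb{L}_0^p$ is globally hypoelliptic. Take $u\in\mathscr{F}'_{\mu,p}$ with $\mathbb{L}^p u\in\mathscr{F}_{\mu,p+1}$ and set $v=\mathscr{T}^p_A u\in\mathscr{F}'_{\mu,p}$. Then
\[
\mathbb{L}_0^p v=\mathscr{T}^{p+1}_A\mathbb{L}^p u\in\mathscr{F}_{\mu,p+1},
\]
so $v\in\mathscr{F}_{\mu,p}$, and therefore $u=\mathscr{T}^p_{-A}v\in\mathscr{F}_{\mu,p}$. The converse is symmetric, using $\mathscr{T}_{-A}$.

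\emph{Global solvability.} The extra point is that the data spaces correspond: one needs $\mathscr{T}^{p+1}_A(\mathbb{E}^p_\mu)=\mathbb{E}^p_{\mu,0}$, the space defined with $A\equiv 0$ (i.e.\ for $\mathbb{L}_0$). I would check the two defining conditions separately.

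\begin{enumerate}
\item \emph{The cocycle condition.} By the conjugation identity in degree $p+1$,
\[
\mathbb{L}_0^{p+1}\mathscr{T}_A^{p+1}f=\mathscr{T}^{p+2}_A\mathbb{L}^{p+1}f .
\]
Hence $\mathbb{L}^{p+1}f=0$ if and only if $\mathbb{L}_0^{p+1}(\mathscr{T}_A f)=0$. For $p+1=m$ both sides vanish trivially.
\item \emph{The exactness condition.} The $j$-th coefficient of $g=\mathscr{T}_A^{p+1}f$ is $g_j=e^{i\lambda_jA}f_j$. So whenever $\lambda_j\boldsymbol a_0\in\mathbb{Z}^m$,
\[
e^{i\lambda_j\boldsymbol a_0\cdot t}g_j=e^{i\lambda_j(\boldsymbol a_0\cdot t+A)}f_j .
\]
The exactness requirement for $f$ relative to $\mathbb{L}^p$ is therefore literally the same as that for $g$ relative to $\mathbb{L}_0^p$.
\end{enumerate}

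Given this, suppose $\mathbb{L}_0^p$ is globally solvable and take $f\in\mathbb{E}^p_\mu$. Solve $\mathbb{L}_0^p v=\mathscr{T}_A f$ with $v\in\mathscr{F}_{\mu,p}$ and set $u=\mathscr{T}_{-A}v$. Then $\mathbb{L}^pu=f$. The converse is symmetric.

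The main obstacle is the bookkeeping in the correspondence of the data spaces, especially confirming that the conjugation identity holds in degree $p+1$ including the top degree. Everything else is formal.
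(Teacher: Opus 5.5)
Your proof is correct. For global hypoellipticity it is the same as the paper's argument: transport by $\mathscr{T}^p_{\pm A}$ combined with the conjugation identity on $\mathscr{F}'_{\mu,p}$, which the paper also uses there without further comment.

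For global solvability you take a genuinely different route. The paper never compares the data spaces. It uses that the $\mathscr{F}_{\mu,p}$ are DFS spaces, invokes Araujo's lemma to identify global solvability with closedness of the range, and observes that closed range is preserved under conjugation by the topological isomorphisms $\mathscr{T}^p_A$ and $\mathscr{T}^{p+1}_A$. You instead verify directly that $\mathscr{T}^{p+1}_A$ maps $\mathbb{E}^p_\mu$ (defined with $A$) onto the compatibility space of $\mathbb{L}^p_0$ (defined with $A\equiv 0$). The cocycle condition transfers via the conjugation identity in degree $p+1$ and injectivity of $\mathscr{T}^{p+2}_A$. The exactness condition transfers because $e^{i\lambda_j\boldsymbol{a}_0\cdot t}e^{i\lambda_j A}f_j=e^{i\lambda_j(\boldsymbol{a}_0\cdot t+A)}f_j$. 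You then carry solutions back and forth explicitly.

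Your route is elementary and needs neither the DFS structure nor a closed-range criterion. It also makes explicit a point the paper passes over: the concretely defined space $\mathbb{E}^p_\mu$, which depends on $A$, is the right target for each operator. The paper's appeal to the closed-range characterization tacitly requires that $\mathbb{E}^p_\mu$ match the space in that criterion (e.g.\ that it coincides with the closure of the range of $\mathbb{L}^p$), which it does not check.

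The paper's route, in exchange, ties global solvability to an intrinsic topological property of the operator, independent of how the compatibility conditions are written down. This use of Araujo's lemma is what the remark after the necessity proof cites as an advantage of requiring solutions in $\mathscr{F}_{\mu,p}$.
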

\begin{proof}
	Concerning global solvability, we recall that $\mathscr{F}_\mu = \mathscr{F}_{\mu,0}$ is a DFS space. Indeed, by \cite[Theorem 2.4]{KowTok}, $\mathcal{S}_{\sigma,\mu}$ is a DFS space, for each $\sigma>1$. Since $\mathscr{F}_\mu$ is the inductive limit of these spaces, by \cite{Komatsu1967}, we conclude that $\mathscr{F}_\mu$ is itself DFS. A similar argument applies to $\mathscr{F}_{\mu,p}$ for each $p = 1, \ldots, m-1$, observing its structure as an inductive limit of Banach spaces.
	
	By a result of Araujo \cite[Lemma 2.2]{Araujo2017}, for continuous linear operators between DFS spaces, global solvability is equivalent to having closed range. Since the conjugation operator $\mathscr{T}_A^p$ is a topological isomorphism, the property of having closed range is preserved under conjugation, transferring the equivalence between $\mathbb{L}^p$ and $\mathbb{L}_0^p$.
	
	Regarding global hypoellipticity, assume first that $\mathbb{L}^p$ is globally hypoelliptic. Let $u \in \mathscr{F}'_{\mu,p}$ be such that $\mathbb{L}_0^p u \in \mathscr{F}_{\mu,p+1}$. From the proof of Theorem \ref{iso_A}, we have the identity
	\[
	\mathbb{L}^p (\mathscr{T}_{-A}^p u) = \mathscr{T}_{-A}^{p+1} (\mathbb{L}_0^p u).
	\]
	Since $\mathbb{L}_0^p u \in \mathscr{F}_{\mu,p+1}$ and $\mathscr{T}_{-A}^{p+1}$ maps $\mathscr{F}_{\mu,p+1}$ into itself, it follows that $\mathbb{L}^p (\mathscr{T}_{-A}^p u) \in \mathscr{F}_{\mu,p+1}$. The global hypoellipticity of $\mathbb{L}^p$ then implies that $\mathscr{T}_{-A}^p u \in \mathscr{F}_{\mu,p}$. Finally, since $\mathscr{T}_{-A}^p$ is an isomorphism, we conclude that $u \in \mathscr{F}_{\mu,p}$. The converse implication is analogous.
\end{proof}

In view of Proposition~\ref{equiv_L0}, in what follows we may assume without loss of generality that $a = a_0$ is constant, and thus $\mathbb{L}^p = \mathbb{L}_0^p$.

\section{Global Solvability}\label{sec-solvability}

Let $u \in \mathscr{F}'_{\mu,p}$ and $f \in \mathscr{F}_{\mu,p+1}$ satisfy  $\mathbb{L}^p u = f$. By expanding $u$ and $f$ in Fourier series, we obtain:

\begin{align*}
	\mathbb{L}^p u &= \sum_{(\tau,j)\in \mathbb{Z}^m \times \mathbb{N}} \left( \sum_{r=1}^m i \tau_r e^{i \tau \cdot t} \varphi_j(x)\, \mathrm{d}t_r \right) \wedge \widehat{u}_{j}(\tau) \\
	&\quad + \sum_{(\tau,j) \in \mathbb{Z}^m \times \mathbb{N}} e^{i \tau \cdot t} \varphi_j(x) \left( \sum_{r=1}^m i \lambda_j a_{r,0} \mathrm{d}t_r \right) \wedge \widehat{u}_j(\tau) \\
	&= \sum_{(\tau,j) \in \mathbb{Z}^m \times \mathbb{N}} e^{i \tau \cdot t} \varphi_j(x) \widehat{f}_j(\tau),
\end{align*}
where $\tau = (\tau_1, \ldots, \tau_m) \in \mathbb{Z}^m$.

Therefore, for each $(\tau,j)$, the Fourier coefficient $\widehat{u}_j(\tau)$ must satisfy the algebraic equation
\begin{equation}\label{a_wedge_u_f}
	a_j(\tau) \wedge \widehat{u}_j(\tau) = \widehat{f}_j(\tau),
\end{equation}
where
\begin{equation}\label{ajtau}
	a_j(\tau) = i \sum_{r=1}^m \left( \tau_r + \lambda_j a_{r,0} \right) \mathrm{d}t_r.
\end{equation}

To simplify notation, we define the norm
\[
\| \tau + \lambda_j \boldsymbol{a}_0 \| \coloneqq \max_{1 \leq r \leq m} |\tau_r + \lambda_j a_{r,0}|.
\]

We recall the following algebraic lemma \cite[Lemma 2.1]{BerPet99jmaa}:

\begin{lemma}\label{sol_ext_alg}
	Let $\mathscr{L} = \sum_{r=1}^m \mathscr{L}_r \, \mathrm{d}t_r \in \textstyle\bigwedge^1 \mathbb{C}^m \setminus \{0\}$ be a nontrivial constant 1-form, and let $\mathscr{H} = \sum_{|J| = p+1} \mathscr{H}_J \, \mathrm{d}t_J \in \textstyle\bigwedge^{p+1} \mathbb{C}^m$, with $p = 0, \ldots, m-1$. Then the equation
	\[
	\mathscr{L} \wedge \mathscr{U} = \mathscr{H}
	\]
	admits a solution $\mathscr{U} \in \textstyle\bigwedge^p \mathbb{C}^m$ if and only if
	\[
	\mathscr{L} \wedge \mathscr{H} = 0.
	\]
	
	Moreover, when this condition holds, a particular solution is given by
	\[
	\mathscr{U}_0 = \sum_{|J|=p+1} \sum_{r' \in J} (-1)^{\mathrm{sign}(r', J)} (\mathscr{L}_{r'})^{-1} \mathscr{H}_J \, \mathrm{d}t_{J \setminus \{r'\}},
	\]
	where the sign is determined by the order of indices and the notation $\widehat{r'}$ indicates omission.
	
	The general solution is then given by
	\[
	\mathscr{U} = \mathscr{U}_0 + \mathscr{L} \wedge \mathscr{W},
	\]
	where $\mathscr{W} \in \textstyle\bigwedge^{p-1} \mathbb{C}^m$ is arbitrary. (By convention, $\textstyle\bigwedge^{-1} \mathbb{C}^m = \{0\}$.)
\end{lemma}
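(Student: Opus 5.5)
Since $\mathscr{L}\neq 0$, after relabeling the coordinates we may assume $\mathscr{L}_{r_0}\neq 0$ for some index $r_0$. The plan is to use the standard Koszul/interior-product argument on the exterior algebra $\textstyle\bigwedge^\bullet\mathbb{C}^m$.

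\emph{Necessity.} If $\mathscr{L}\wedge\mathscr{U}=\mathscr{H}$, then $\mathscr{L}\wedge\mathscr{H}=\mathscr{L}\wedge\mathscr{L}\wedge\mathscr{U}=0$, since $\mathscr{L}\wedge\mathscr{L}=0$ for any $1$-form.

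\emph{Sufficiency.} Choose a vector $v\in\mathbb{C}^m$ with $\langle\mathscr{L},v\rangle=1$; for instance $v=\mathscr{L}_{r_0}^{-1}e_{r_0}$. Let $\iota_v$ denote the interior product, which is an antiderivation: for a $1$-form $\mathscr{L}$ and any form $\omega$,
\[
\iota_v(\mathscr{L}\wedge\omega)=\langle\mathscr{L},v\rangle\,\omega-\mathscr{L}\wedge\iota_v\omega=\omega-\mathscr{L}\wedge\iota_v\omega.
\]
Applying this identity with $\omega=\mathscr{H}$ and using $\mathscr{L}\wedge\mathscr{H}=0$ gives $\mathscr{H}=\mathscr{L}\wedge\iota_v\mathscr{H}$. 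Hence $\mathscr{U}_0:=\iota_v\mathscr{H}\in\textstyle\bigwedge^p\mathbb{C}^m$ is a solution.

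Expanding $\iota_v\mathscr{H}$ in the basis $\mathrm{d}t_{J\setminus\{r'\}}$ yields exactly the displayed formula. The sum over $r'\in J$ collapses to the single term with $r'=r_0$ for our choice of $v$. Equivalently, the formula is a symmetrized version in which one contracts with any $r'$ for which $\mathscr{L}_{r'}\neq0$. The only delicate bookkeeping is matching the sign $(-1)^{\mathrm{sign}(r',J)}$, which is the position of $r'$ in the increasing multi-index $J$, together with the convention that only indices with $\mathscr{L}_{r'}\neq 0$ are used. I expect this sign and convention check to be the main (purely notational) obstacle. In the proof I would state the solution in the invariant form $\iota_v\mathscr{H}$ and then verify the coordinate formula on a basis element $\mathrm{d}t_J$.

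\emph{General solution.} The solution set is $\mathscr{U}_0+\ker(\mathscr{L}\wedge\cdot)$ in degree $p$, so it suffices to show exactness: if $\mathscr{L}\wedge\mathscr{V}=0$ then $\mathscr{V}=\mathscr{L}\wedge\mathscr{W}$. Applying the identity above with $\omega=\mathscr{V}$ gives $\mathscr{V}=\mathscr{L}\wedge\iota_v\mathscr{V}$, so we may take $\mathscr{W}=\iota_v\mathscr{V}\in\textstyle\bigwedge^{p-1}\mathbb{C}^m$. Conversely, every form $\mathscr{L}\wedge\mathscr{W}$ lies in the kernel because $\mathscr{L}\wedge\mathscr{L}=0$. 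For $p=0$ the kernel is trivial, since $\mathscr{L}\wedge c=c\mathscr{L}=0$ forces $c=0$, which matches the convention $\textstyle\bigwedge^{-1}\mathbb{C}^m=\{0\}$.
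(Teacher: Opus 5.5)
Your main argument is correct, and it differs from the paper, which gives no proof of this lemma and only cites Lemma~2.1 of Bergamasco--Petronilho. Together with $\mathscr{L}\wedge\mathscr{L}=0$, the homotopy identity $\iota_v(\mathscr{L}\wedge\omega)=\omega-\mathscr{L}\wedge\iota_v\omega$, with $\langle\mathscr{L},v\rangle=1$, gives three things at once:
\begin{itemize}
\item the solvability criterion;
\item the particular solution $\iota_v\mathscr{H}$;
\item exactness of $\mathscr{L}\wedge\cdot$ in degree $p$, hence the description of the general solution, with the cases $p=0$ and $p+1=m$ handled automatically.
\end{itemize}
It does this coordinate-free, without the index bookkeeping a direct coordinate proof would need.

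What does not hold is your claim that expanding $\iota_v\mathscr{H}$ yields exactly the displayed formula, and your fallback that the double sum is a symmetrized version of it. Read literally, the formula is not a solution. Regroup $\sum_{|J|=p+1}\sum_{r'\in J}$ as $\sum_{r'=1}^m\sum_{J\ni r'}$. The double sum then equals $\iota_w\mathscr{H}$ with $w=\sum_{r'=1}^m\mathscr{L}_{r'}^{-1}e_{r'}$. Since $\langle\mathscr{L},w\rangle=m$, your own identity shows it solves $\mathscr{L}\wedge\mathscr{U}=m\mathscr{H}$. It is also undefined as soon as some $\mathscr{L}_{r'}=0$. For instance, with $m=2$, $p=0$ and $\mathscr{L}=\mathscr{H}=\mathrm{d}t_1+\mathrm{d}t_2$, the unique solution is $\mathscr{U}=1$, while the double sum gives $\pm2$. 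A genuine symmetrization would need the normalizing factor $1/k$, where $k$ is the number of indices used.

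What your argument actually proves is the single-index formula, and you should state it explicitly instead of claiming equality. Fix $r'$ with $\mathscr{L}_{r'}\neq0$ and take $\mathscr{U}_0=\iota_{\mathscr{L}_{r'}^{-1}e_{r'}}\mathscr{H}=\sum_{J\ni r'}(-1)^{\mathrm{pos}(r',J)-1}\mathscr{L}_{r'}^{-1}\mathscr{H}_J\,\mathrm{d}t_{J\setminus\{r'\}}$. This is clearly the intended reading: in the sufficiency proof of Theorem~\ref{thm_GS}, the lemma is applied with a single index $r$ chosen to maximize $|\tau_r+\lambda_j a_{r,0}|$.

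The sign also needs correcting. The exponent is the number of elements of $J$ smaller than $r'$, i.e.\ $\mathrm{pos}(r',J)-1$, which is the convention of Lemma~\ref{lem:combinatorial_identity}. It is not the position itself, as you suggest. With $(-1)^{\mathrm{pos}(r',J)}$ you would obtain $-\iota_v\mathscr{H}$, which solves $\mathscr{L}\wedge\mathscr{U}=-\mathscr{H}$.
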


We also introduce the index set
\begin{equation}\label{set_Z}
	\mathcal{Z} \coloneqq \left\{ j \in \mathbb{N} \mid \lambda_j \boldsymbol{a}_0 \in \mathbb{Z}^m \right\},
\end{equation}
which captures the so-called resonant indices for which the vector $\lambda_j \boldsymbol{a}_0$ lies in the integer lattice.

We now define a Diophantine-type condition that plays a central role in our solvability criterion.

\begin{definition}
	Let $\sigma > 1$ and $\mu \geq 1/2$. A vector $\boldsymbol{\alpha} = (\alpha_1, \ldots, \alpha_m) \in \mathbb{R}^m$ is said to satisfy the Diophantine condition ($\mathrm{DC}_{\sigma,\mu}$) with respect to a sequence of real numbers $\{\lambda_j\}_{j \in \mathbb{N}}$ if for every $\varepsilon > 0$, there exists a constant $C_\varepsilon > 0$ such that
	\begin{equation}\label{DC}
		\| \tau + \lambda_j \boldsymbol{\alpha} \| = \max_{1\leq r\leq m}|\tau_r+\lambda_j \alpha_r| \geq C_{\varepsilon} \exp \left(-\varepsilon(\|\tau\|^{1/\sigma} + j^{\frac{1}{2n\mu}})\right), \tag{$\mathrm{DC}_{\sigma,\mu}$}
	\end{equation}
	for all $(\tau, j) \in \mathbb{Z}^m \times \mathbb{N}$ such that $\tau + \lambda_j \boldsymbol{\alpha} \neq 0$.
\end{definition}

We are now ready to state the main result of this section.

\begin{theorem}\label{thm_GS}
	Let $p \in \{0, \ldots, m-1\}$. The operator $\mathbb{L}^p$ is globally solvable if and only if the vector $\boldsymbol{a}_0$ satisfies the condition {\em (}$\mathrm{DC}_{\sigma,\mu}${\em )} for some $\sigma > 1$, with respect to the eigenvalue sequence $\{\lambda_j\}_{j \in \mathbb{N}}$ of $P(x,D_x)$.
\end{theorem}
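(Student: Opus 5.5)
By Proposition~\ref{equiv_L0} we may assume $a=a_0$ is constant, so that $\mathbb{L}^p u=f$ reduces to the family of algebraic equations \eqref{a_wedge_u_f} with $a_j(\tau)$ given by \eqref{ajtau}. The proof then works entirely at the level of the full Fourier coefficients, using Proposition~\ref{charac_full_fourier-functions} to move between decay of coefficients and membership in $\mathscr{F}_{\mu,p}$. Recall that $\mathscr{F}_\mu=\bigcup_{\sigma>1}\mathcal{S}_{\sigma,\mu}$. Hence an element of $\mathscr{F}_{\mu,p}$ is characterized by the bound $|\widehat{u}_j(\tau)|\le C\exp[-\varepsilon(\|\tau\|^{1/\sigma}+j^{1/(2n\mu)})]$ for some $\sigma>1$ and $\varepsilon,C>0$.

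\emph{Sufficiency.} Assume $\boldsymbol{a}_0$ satisfies ($\mathrm{DC}_{\sigma_0,\mu}$), and let $f\in\mathbb{E}^p_\mu$, say $f\in\mathcal{S}_{\sigma,\mu}$ with decay rate $\varepsilon_0$.

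1. Reduce to a single Gevrey order. Replacing $\sigma$ and $\sigma_0$ by $\max\{\sigma,\sigma_0\}$ is harmless: for $\|\tau\|\ge1$ we have $\|\tau\|^{1/\sigma'}\le\|\tau\|^{1/\sigma}$ when $\sigma'\ge\sigma$, so a lower bound of DC type persists and $f$ stays in the bigger space.

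2. Treat the pairs $(\tau,j)$ with $\tau+\lambda_j\boldsymbol{a}_0\neq0$. Here $a_j(\tau)\neq0$. The closedness $\mathbb{L}^{p+1}f=0$ gives $a_j(\tau)\wedge\widehat f_j(\tau)=0$, so Lemma~\ref{sol_ext_alg} produces the solution $\widehat u_j(\tau)=\mathscr U_0$. When choosing the index $r'$, I would not use the formula literally. Instead I would pick $r'$ realizing the maximum $\|\tau+\lambda_j\boldsymbol a_0\|$, obtained via the standard contraction $\widehat u=\iota_{v}\widehat f/|a_j(\tau)|^2$ with $v$ the dual vector of $a_j(\tau)$. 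This gives
\[
|\widehat u_j(\tau)|\le C_m\|\tau+\lambda_j\boldsymbol a_0\|^{-1}|\widehat f_j(\tau)|.
\]

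3. Combine with DC. Taking $\varepsilon=\varepsilon_0/2$ in \eqref{DC} yields
\[
|\widehat u_j(\tau)|\le C'\exp[-\tfrac{\varepsilon_0}{2}(\|\tau\|^{1/\sigma}+j^{1/(2n\mu)})].
\]

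4. Treat the resonant pairs with $\tau+\lambda_j\boldsymbol a_0=0$. These occur only for $j\in\mathcal Z$ and $\tau=-\lambda_j\boldsymbol a_0$. There the equation reads $0=\widehat f_j(\tau)$. Since $e^{i\lambda_j\boldsymbol a_0\cdot t}f_j$ is exact, its zeroth Fourier coefficient, which equals $\widehat f_j(-\lambda_j\boldsymbol a_0)$, vanishes. For $p\ge1$ it is actually the constant ($p+1$)-form part that must vanish, which exactness gives. So we set $\widehat u_j(\tau)=0$ there.

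5. Conclude with Proposition~\ref{charac_full_fourier-functions}, which gives $u\in\mathcal S_{\sigma,\mu}\subset\mathscr F_\mu$ with $\mathbb L^p u=f$.

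\emph{Necessity.} I would argue by contradiction: assume DC fails for every $\sigma>1$, and build a datum in $\mathbb E^p_\mu$ with no solution in $\mathscr F_{\mu,p}$. Failure for each $\sigma=1+1/k$ gives some $\varepsilon_k>0$ and pairs $(\tau_\ell,j_\ell)$ with $0<\|\tau_\ell+\lambda_{j_\ell}\boldsymbol a_0\|<\exp[-\varepsilon_k(\|\tau_\ell\|^{1/\sigma}+j_\ell^{1/(2n\mu)})]$. This must hold for infinitely many distinct pairs, since finitely many nonzero values cannot violate it for all constants. Diagonalizing, we extract a sequence of distinct pairs $(\tau_\ell,j_\ell)$ with $\|\tau_\ell\|+j_\ell\to\infty$.

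Define $\widehat f_{j_\ell}(\tau_\ell)=a_{j_\ell}(\tau_\ell)\wedge\omega_\ell$ times a small weight, and $0$ elsewhere. Here $\omega_\ell$ is a fixed unit $p$-form chosen so the wedge product is nonzero and comparable in size to $\|\tau_\ell+\lambda_{j_\ell}\boldsymbol a_0\|$; for instance take $\omega_\ell=\mathrm dt_K$ with $K$ avoiding the maximizing index. For the weight I would try $\exp[-\tfrac{\varepsilon_k}{2}(\|\tau_\ell\|^{1/\sigma_k}+j_\ell^{1/(2n\mu)})]$ if needed; the aim is to keep $f$ in some $\mathcal S_{\sigma,\mu}$. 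Such an $f$ is automatically $\mathbb L$-closed because its coefficients are $\mathbb L$-exact, and it vanishes on resonant pairs, so $f\in\mathbb E^p_\mu$. Any solution has $\widehat u_{j_\ell}(\tau_\ell)=\omega_\ell+a_{j_\ell}(\tau_\ell)\wedge\mathscr W_\ell$ by Lemma~\ref{sol_ext_alg}. Its norm is bounded below, which shows $u\notin\mathscr F_{\mu,p}$.

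The main obstacle is the bookkeeping in the necessity step. The datum must decay in one fixed $\mathcal S_{\sigma,\mu}$, yet the solution coefficients must be too large for every $\sigma'$. A more robust alternative is to invoke Araujo's closed-range equivalence and show that DC fails iff the a priori estimate fails. I would first try the explicit construction, choosing $f$'s coefficients of size $\|\tau_\ell+\lambda_{j_\ell}\boldsymbol a_0\|^{1/2}$, mimicking the scalar case in \cite{AviCap22}.
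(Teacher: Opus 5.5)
\textbf{Gap: Step 1 of your sufficiency argument runs the wrong way.} Since $\|\tau\|^{1/\sigma'}\le\|\tau\|^{1/\sigma}$ for $\sigma'\ge\sigma$, the right-hand side of $(\mathrm{DC}_{\sigma',\mu})$ is \emph{larger}. So the condition gets \emph{stronger} as $\sigma$ increases, and from $(\mathrm{DC}_{\sigma_0,\mu})$ you cannot pass to $(\mathrm{DC}_{\sigma,\mu})$ with $\sigma>\sigma_0$. For a datum $f\in\mathcal{S}_{\sigma,\mu}$ with $\sigma>\sigma_0$, your Step 3 only yields the exponent $-\varepsilon_0\|\tau\|^{1/\sigma}+\varepsilon\|\tau\|^{1/\sigma_0}-(\varepsilon_0-\varepsilon)j^{1/(2n\mu)}$. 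On the pairs that matter ($\|\tau+\lambda_j\boldsymbol{a}_0\|<1$, $\boldsymbol{a}_0\neq0$) one has $\|\tau\|\asymp j^{M/(2n)}$, so the positive term dominates as soon as $\sigma_0<\min\{\sigma,M\mu\}$.

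This is not merely a matter of bookkeeping. The necessity construction, run at a \emph{single} index $\sigma$ where (DC) fails, already produces $f\in\mathcal{S}_{\sigma,\mu}\cap\mathbb{E}^p_\mu$ with no solution in $\mathscr{F}_{\mu,p}$. Hence global solvability forces $(\mathrm{DC}_{\sigma,\mu})$ for every $\sigma>1$, which is in general more than $(\mathrm{DC}_{\sigma_0,\mu})$ for one small $\sigma_0$. The paper's sufficiency proof in fact applies (DC), with $\varepsilon=\varepsilon_0/2$, at the same $\sigma$ that governs the decay of $h$. In other words, it uses (DC) at the Gevrey index of the datum, and you should do the same: ``for some $\sigma$'' must effectively be read as ``for every $\sigma>1$'', equivalently for all large $\sigma$.

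With Step 1 replaced in this way, your Steps 2--5 are correct and follow the paper, with one harmless shortcut. You handle $j\in\mathcal{Z}$ inside the same Fourier computation: there $\tau+\lambda_j\boldsymbol{a}_0\in\mathbb{Z}^m$ is either $0$, where exactness forces $\widehat f_j(\tau)=0$, or has norm $\ge1$. The paper instead conjugates to $\mathrm{d}_t$ on that block.

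\textbf{Necessity.} The diagonalization over $\sigma_k=1+1/k$ is what creates the obstacle you flag: the $\varepsilon_k$ may tend to $0$, so the datum need not lie in any single $\mathcal{S}_{\sigma,\mu}$. It is unnecessary; fix one $\sigma$ at which (DC) fails and its $\varepsilon_0$, as the paper does.

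A \emph{decaying} weight also defeats the argument, because the lower bound you get for $\widehat u_{j_\ell}(\tau_\ell)$ is then only that weight. Use weight $1$, i.e.\ $\widehat f_{j_\ell}(\tau_\ell)=a_{j_\ell}(\tau_\ell)\wedge\mathrm{d}t_K$ with $K$ avoiding the maximizing index $r_0$. Then $|\widehat f_{j_\ell}(\tau_\ell)|\le\|\tau_\ell+\lambda_{j_\ell}\boldsymbol{a}_0\|$ decays at rate $\varepsilon_0$. Alternatively use the paper's growing weight $c_\ell$, which even rules out solutions in $\mathscr{F}'_{\mu,p}$.

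The lower bound you assert without proof takes one line. The $\mathrm{d}t_{K\cup\{r_0\}}$-coefficient of $a_{j_\ell}(\tau_\ell)\wedge\mathrm{d}t_K$ has modulus $\|\tau_\ell+\lambda_{j_\ell}\boldsymbol{a}_0\|$. Every coefficient of $a_{j_\ell}(\tau_\ell)\wedge U$ is at most $(p+1)\|\tau_\ell+\lambda_{j_\ell}\boldsymbol{a}_0\|\max_J|U_J|$. Hence any solution $U$ satisfies $\max_J|U_J|\ge 1/(p+1)$. This is a cleaner substitute for the paper's Lemma~\ref{lem:combinatorial_identity}, and it avoids having to reorder coordinates uniformly in $\ell$.
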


We divide the proof of Theorem~\ref{thm_GS} into two parts: sufficiency and necessity, which are treated in the next two subsections.

\subsection{Proof of Theorem \ref{thm_GS}: Sufficiency} \

Assume that $\boldsymbol{a}_0$ satisfies the Diophantine condition \eqref{DC} with respect to the sequence $\{\lambda_j\}_{j \in \mathbb{N}}$, and let $f \in \mathbb{E}_\mu^p$. As shown at the beginning of Section~\ref{sec-solvability}, if $u \in \mathscr{F}'_{\mu,p}$ satisfies $\mathbb{L}^p u = f$, then for every $(\tau, j) \in \mathbb{Z}^m \times \mathbb{N}$ we must have
\[
a_j(\tau) \wedge \widehat{u}_j(\tau) = \widehat{f}_j(\tau),
\]
where $a_j(\tau)$ is given by \eqref{ajtau}. Since $\mathbb{L}^{p+1} f = 0$ by assumption (as $f \in \mathbb{E}^p_\mu$), it follows that
\[
a_j(\tau) \wedge \widehat{f}_j(\tau) = 0, 
\]
for all \((\tau, j) \in \mathbb{Z}^m \times \mathbb{N}.\)

To separate resonant and non-resonant modes, let $S \subset \mathbb{N}$ be arbitrary. Following the strategy in \cite[Section 4]{BerCorPet96}, define the subspace
\[
\mathscr{F}'_{\mu,S} \coloneqq \left\{ u \in \mathscr{F}'_{\mu} \mid u(t,x) = \sum_{j \in S} u_j(t) \varphi_j(x) \right\},
\]
and let $\mathscr{F}'_{\mu,p,S} \coloneqq \textstyle\bigwedge^{p,0} \mathscr{F}'_{\mu,S}$ denote the space of $p$-currents of the form $u = \sum_{|J| = p} u_J(t,x) \, \mathrm{d}t_J$, where each $u_J \in \mathscr{F}'_{\mu,S}$. Analogously, we define the smooth space $\mathscr{F}_{\mu,p,S}$.

We then obtain the direct sum decompositions:
\[
\mathscr{F}_{\mu,p} = \mathscr{F}_{\mu,p,S} \oplus \mathscr{F}_{\mu,p,S^c}, \quad \mathscr{F}'_{\mu,p} = \mathscr{F}'_{\mu,p,S} \oplus \mathscr{F}'_{\mu,p,S^c},
\]
where $S^c = \mathbb{N} \setminus S$. Furthermore, the operator $\mathbb{L}^p$ respects these decompositions:
\[
\mathbb{L}^p(\mathscr{F}_{\mu,p,S}) \subset \mathscr{F}_{\mu,p+1,S}, \quad \mathbb{L}^p(\mathscr{F}'_{\mu,p,S}) \subset \mathscr{F}'_{\mu,p+1,S},
\]
and similarly for $S^c$.

We denote by $\mathbb{L}^p_S$ the restriction of $\mathbb{L}^p$ to $\mathscr{F}_{\mu,p,S}$.

\begin{definition}
	We say that the restricted operator $\mathbb{L}^p_S$ is \emph{globally solvable} if for every $f \in \mathbb{E}^p_{\mu,S} \coloneqq \mathbb{E}^p_\mu \cap \mathscr{F}_{\mu,p+1,S}$, there exists $u \in \mathscr{F}_{\mu,p,S}$ such that $\mathbb{L}^p_S u = f$.
\end{definition}

In particular, we have the decomposition
\[
\mathbb{E}^p_\mu = \mathbb{E}^p_{\mu,S} \oplus \mathbb{E}^p_{\mu,S^c},
\]
and the operator $\mathbb{L}^p$ is globally solvable if and only if both $\mathbb{L}^p_S$ and $\mathbb{L}^p_{S^c}$ are globally solvable.

Let us now choose $S = \mathcal{Z}$, where $\mathcal{Z} = \{j\in\mathbb{N}: \lambda_j\boldsymbol{a}_0\in\mathbb{Z}^m\} $ is the set of resonant indices defined in \eqref{set_Z}.

\begin{proposition}
	The operator
	\[
	\mathbb{L}^p_{\mathcal{Z}} :
	\mathscr{F}_{\mu,p,\mathcal{Z}}
	\longrightarrow
	\mathscr{F}_{\mu,p+1,\mathcal{Z}}
	\]
	is globally solvable for every $p=0,1,\dots,m-1$.
\end{proposition}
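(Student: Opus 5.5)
For $j\in\mathcal{Z}$ the integrating factor becomes periodic, so the plan is to use it to reduce $\mathbb{L}^p$ to the de Rham differential $\mathrm{d}_t$ on $\mathbb{T}^m$, solve there with uniform bounds, and resum. Since we work with $a=a_0$ constant (Proposition~\ref{equiv_L0}), we can take $A=0$.

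\textbf{Setup for a single mode.} Let $f\in\mathbb{E}^p_{\mu,\mathcal{Z}}$, written as $f=\sum_{j\in\mathcal{Z}} f_j(t)\varphi_j(x)$. For each $j\in\mathcal{Z}$ set $k_j\coloneqq\lambda_j\boldsymbol{a}_0\in\mathbb{Z}^m$. Then $e^{ik_j\cdot t}$ is a genuine $2\pi$-periodic Gevrey function, and the equation $\mathrm{d}_t u_j+i\lambda_j a_0\wedge u_j=f_j$ is equivalent to
\[
\mathrm{d}_t\bigl(e^{ik_j\cdot t}u_j\bigr)=e^{ik_j\cdot t}f_j \eqqcolon g_j .
\]
By the definition of $\mathbb{E}^p_\mu$, each $g_j$ is an exact $(p+1)$-form on $\mathbb{T}^m$ (in particular closed).

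\textbf{Solving with estimates via Fourier coefficients.} Multiplication by $e^{ik_j\cdot t}$ only shifts frequencies: $\widehat{g_j}(\tau)=\widehat{f_j}(\tau-k_j)$. Exactness of $g_j$ gives $\widehat{g_j}(0)=0$. Closedness gives $i\tau\wedge\widehat{g_j}(\tau)=0$ for every $\tau$.

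For $\tau\neq0$, apply Lemma~\ref{sol_ext_alg} with $\mathscr{L}=i\sum_r\tau_r\,\mathrm{d}t_r$. Choose an index $r'$ with $|\tau_{r'}|=\|\tau\|\ge1$ and use the standard homotopy-type solution: contraction with the $r'$-th direction divided by $i\tau_{r'}$. This gives $\widehat{v_j}(\tau)$ with
\[
i\tau\wedge\widehat{v_j}(\tau)=\widehat{g_j}(\tau),\qquad |\widehat{v_j}(\tau)|\le C_m|\widehat{g_j}(\tau)| .
\]
Set $\widehat{v_j}(0)=0$. Then define $u_j\coloneqq e^{-ik_j\cdot t}v_j$, so that
\[
\widehat{u_j}(\tau)=\widehat{v_j}(\tau+k_j),\qquad |\widehat{u_j}(\tau)|\le C_m\,|\widehat{f_j}(\tau)| .
\]
The point is that $u$ inherits the coefficient bounds of $f$ index by index.

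\textbf{Membership in the right space and conclusion.} Since $f\in\mathcal{S}_{\sigma,\mu}$ for some $\sigma>1$, Proposition~\ref{charac_full_fourier-functions} yields
\[
|\widehat{f_j}(\tau)|\le C\exp\bigl[-\varepsilon(\|\tau\|^{1/\sigma}+j^{1/(2n\mu)})\bigr].
\]
By the bound above, the same estimate holds for $\widehat{u_j}(\tau)$ with constant $C_mC$. Applying Proposition~\ref{charac_full_fourier-functions} again, $u=\sum_{j\in\mathcal{Z}}u_j\varphi_j\in\mathscr{F}_{\mu,p,\mathcal{Z}}$. Reversing the computation shows $\mathbb{L}^p u=f$ coefficientwise.

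\textbf{Main obstacle.} The delicate point is the coefficient $\tau=k_j$ of $u_j$, i.e.\ $\tau=0$ for $v_j$. This frequency can be large because $k_j$ grows like $\lambda_j$, so a bound on it is not automatic. It is exactly where exactness, rather than mere closedness, is required: exactness forces $\widehat{g_j}(0)=0$, and the equation imposes no constraint on $\widehat{v_j}(0)$, which is why we may take it to be $0$. Everything else is the uniformity of the constant $C_m$ in the algebraic lemma. That uniformity holds because $|\tau_{r'}|\ge1$ for integer nonzero $\tau$, so no Diophantine condition is needed on $\mathcal{Z}$.
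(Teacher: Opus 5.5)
Your proposal is correct and follows the same route as the paper. The periodic integrating factor $e^{i\lambda_j\boldsymbol{a}_0\cdot t}$, $j\in\mathcal{Z}$, conjugates $\mathbb{L}^p_{\mathcal{Z}}$ to $\mathrm{d}_t$; exactness gives $\widehat{g}_j(0)=0$, so the zero mode can be set to $0$; and $|\tau_{r'}|\ge 1$ for $\tau\in\mathbb{Z}^m\setminus\{0\}$ gives a bound uniform in $(\tau,j)$.

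Your bookkeeping is slightly sharper than the paper's. Composing the two frequency shifts gives $|\widehat{u}_j(\tau)|\le C_m|\widehat{f}_j(\tau)|$ with the same $\sigma$. The paper instead asserts that multiplication by $e^{i\lambda_j\boldsymbol{a}_0\cdot t}$ is an isomorphism of $\mathscr{F}_{\mu,p,\mathcal{Z}}$. Since $\|\lambda_j\boldsymbol{a}_0\|$ grows like $j^{M/(2n)}$, that claim in general requires enlarging $\sigma$ (to about $\sigma\ge M\mu$).

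One minor slip: the free frequency of $u_j$ is $\tau=-k_j$, not $\tau=k_j$.
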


\begin{proof}
	Fix $p\in\{0,\dots,m-1\}$. For $u\in\mathscr{F}_{\mu,p,\mathcal{Z}}$ written as
	$u(t,x)=\sum_{j\in\mathcal{Z}}u_j(t)\varphi_j(x)$,
	consider the linear map
	\[
	\mathscr{I}_p u
	=
	\sum_{j\in\mathcal{Z}}u_j(t)\,e^{i\lambda_j\boldsymbol{a}_0\cdot t}\varphi_j(x).
	\]
	Since $\lambda_j\boldsymbol{a}_0\in\mathbb{Z}^m$ for $j\in\mathcal{Z}$,
	the exponential factor $e^{i\lambda_j\boldsymbol{a}_0\cdot t}$ is smooth and
	$2\pi$--periodic on $\mathbb{T}^m$.
	Using the characterization of $\mathscr{F}_{\mu,p}$ in terms of Fourier
	coefficients, it follows that $\mathscr{I}_p$ is a well-defined continuous
	isomorphism of $\mathscr{F}_{\mu,p,\mathcal{Z}}$ onto itself, with inverse
	given by multiplication by $e^{-i\lambda_j\boldsymbol{a}_0\cdot t}$.
	
	A direct computation shows that, for each $j\in\mathcal{Z}$,
	\[
	\mathrm{d}_t\!\left(e^{i\lambda_j\boldsymbol{a}_0\cdot t}u_j(t)\right)
	=
	e^{i\lambda_j\boldsymbol{a}_0\cdot t}
	\bigl(\mathrm{d}_t u_j(t)+ i\lambda_j \boldsymbol{a}_0\wedge u_j(t)\bigr),
	\]
	and therefore
	\[
	\mathscr{I}_{p+1}\circ\mathbb{L}^p_{\mathcal{Z}}\circ\mathscr{I}_p^{-1}
	=
	\mathrm{d}_t.
	\]
	Consequently, by the conjugation constructed above, the problem of global solvability for $\mathbb{L}^p_{\mathcal{Z}}$ reduces to the corresponding problem for the exterior derivative $\mathrm{d}_t$ acting on $\mathscr{F}_{\mu,p,\mathcal{Z}}$, using the same reasoning as in the proof of Proposition~\ref{equiv_L0}.

	For the operator $\mathrm{d}_t$, the space $\mathbb{E}^p_{\mu,\mathcal{Z}}$
	consists of all $(p+1)$--forms
	\[
	g(t,x)=\sum_{j\in\mathcal{Z}} g_j(t)\varphi_j(x)
	\in\mathscr{F}_{\mu,p+1,\mathcal{Z}}
	\]
	such that each coefficient $g_j(t)$ is an exact $(p+1)$--form on $\mathbb{T}^m$.
	Let such a $g$ be given. 	We seek $v\in\mathscr{F}_{\mu,p,\mathcal{Z}}$ satisfying
	\(\mathrm{d}_t v = g.\)
	
	Writing the Fourier expansions
	\[
	g_j(t)=\sum_{\tau\in\mathbb{Z}^m}\widehat g_j(\tau)e^{i\tau\cdot t},
	\qquad
	v_j(t)=\sum_{\tau\in\mathbb{Z}^m}\widehat v_j(\tau)e^{i\tau\cdot t},
	\]
	the equation $\mathrm{d}_t v_j=g_j$ is equivalent, for each $\tau\in\mathbb{Z}^m$, to
	\[
	\left(i\sum_{r=1}^m\tau_r\,\mathrm{d}t_r\right)\wedge \widehat v_j(\tau)
	=
	\widehat g_j(\tau).
	\]
		Since $g_j$ is exact, we necessarily have $\widehat g_j(0)=0$,
	and we set $\widehat v_j(0)=0$.
	For $\tau\neq0$, Lemma~\ref{sol_ext_alg} applies and yields a solution
	$\widehat v_j(\tau)\in\textstyle\bigwedge^p\mathbb{C}^m$.
	Moreover, choosing $r$ such that $\tau_r\neq0$, the lemma provides the estimate
	\[
	|\widehat v_j(\tau)|
	\le
	C_p\,\frac{|\widehat g_j(\tau)|}{|\tau_r|}
	\le
	C'_p\,|\widehat g_j(\tau)|,
	\]
	where the constants depend only on $p$ and $m$.
	
	Since $g\in\mathscr{F}_{\mu,p+1,\mathcal{Z}}$,
	its Fourier coefficients satisfy the defining exponential decay estimates.
	The bound above shows that the same type of estimates holds for
	$\widehat v_j(\tau)$.
	Therefore the series
	\[
	v(t,x)
	=
	\sum_{j\in\mathcal{Z}}
	\sum_{\tau\in\mathbb{Z}^m}
	\widehat v_j(\tau)\,e^{i\tau\cdot t}\varphi_j(x)
	\]
	defines an element of $\mathscr{F}_{\mu,p,\mathcal{Z}}$.
	
	By construction, $\mathrm{d}_t v=g$. 
	Conjugating back by $\mathscr{I}_p^{-1}$,
	we obtain a solution to $\mathbb{L}^p_{\mathcal{Z}}u=g$ in
	$\mathscr{F}_{\mu,p,\mathcal{Z}}$.
	Hence $\mathbb{L}^p_{\mathcal{Z}}$ is globally solvable.
\end{proof}

Now consider $h \in \mathbb{E}^p_{\mu,\mathcal{Z}^c}$. By Lemma~\ref{sol_ext_alg}, we can construct a solution $w$ to $\mathbb{L}^p w = h$ with Fourier coefficients
\[
\widehat{w}_j(\tau) = \frac{1}{i} \sum_{|J| = p+1} \sum_{r \in J} (-1)^{\mathrm{sign}(r, J)} \frac{(\widehat{h}_j(\tau))_J}{\tau_r + \lambda_j a_{r,0}} \, \mathrm{d}t_{J \setminus \{r\}},
\]
where $j \in \mathcal{Z}^c$, $\tau \in \mathbb{Z}^m$, and the index $r$ is chosen to maximize 
\[
|\tau_r + \lambda_j a_{r,0}| = \| \tau + \lambda_j \boldsymbol{a}_0 \|.
\]

Since $h \in \mathscr{F}_{\mu,p+1}$, there exist  \(C, \varepsilon_0>0\) such that
\[
\| \widehat{h}_j(\tau)_J \| \leq C \exp\left( - \varepsilon_0 \left( \|\tau\|^{1/\sigma} + j^{1/(2n\mu)} \right) \right).
\]

Combining this with the Diophantine condition \eqref{DC} (with $\varepsilon = \varepsilon_0/2$), we conclude that the series
\[
w(t,x) = \sum_{j \in \mathcal{Z}^c} \sum_{\tau \in \mathbb{Z}^m} \widehat{w}_j(\tau) e^{i \tau \cdot t} \varphi_j(x)
\]
defines a $p$-form in $\mathscr{F}_{\mu,p,\mathcal{Z}^c}$ satisfying $\mathbb{L}^p w = h$.

Finally, since $f = g + h$ with $g \in \mathbb{E}^p_{\mu,\mathcal{Z}}$ and $h \in \mathbb{E}^p_{\mu,\mathcal{Z}^c}$, and we have constructed $v \in \mathscr{F}_{\mu,p,\mathcal{Z}}$ and $w \in \mathscr{F}_{\mu,p,\mathcal{Z}^c}$ solving $\mathbb{L}^p v = g$ and $\mathbb{L}^p w = h$, we obtain a global solution $u = v + w \in \mathscr{F}_{\mu,p}$ of $\mathbb{L}^p u = f$.

	Hence $\mathbb{L}^p$ is globally solvable.

\subsection{Theorem \ref{thm_GS}: Necessity} \

Assume that the Diophantine condition \eqref{DC} fails. Then there exist
$\varepsilon_0>0$ and a sequence $\{(\tau_\ell,j_\ell)\}_{\ell\in\mathbb{N}}$ in $\mathbb{Z}^m\times\mathbb{N}$ such that
\begin{equation*}
	\|\tau_\ell\|+j_\ell\to\infty,\quad \ell\to\infty,
\end{equation*}
and
\begin{equation}\label{non_DC}
	0<\|\tau_\ell+\lambda_{j_\ell}\boldsymbol{a}_0\|
	\leq \exp\!\left(-\varepsilon_0\Big(\|\tau_\ell\|^{1/\sigma}
	+ j_\ell^{1/(2n\mu)}\Big)\right),
\end{equation}
for all $\ell\in\mathbb{N}$.

We define the growth sequence
\[
c_\ell := \exp\!\left(\frac{\varepsilon_0}{2}\Big(\|\tau_\ell\|^{1/\sigma} + j_\ell^{1/(2n\mu)}\Big)\right).
\]
For each $\ell \in \mathbb{N}$, let $u_\ell \in \mathscr{F}_{\mu,p}$ be defined by
\[
u_\ell(t,x) =
\begin{cases}
	c_\ell e^{i\tau_\ell \cdot t}\varphi_{j_\ell}(x), & p=0,\\[2mm]
	c_\ell e^{i\tau_\ell \cdot t}\varphi_{j_\ell}(x)\, \mathrm{d}t_2 \wedge \cdots \wedge \mathrm{d}t_{p+1}, & 1 \le p \le m-1.
\end{cases}
\]
While each term $u_\ell$ is smooth, the formal series
\[
u(t,x) := \sum_{\ell \in \mathbb{N}} u_\ell(t,x)
\]
does not define an element of $\mathscr{F}'_{\mu,p}$ due to the rapid growth of $c_\ell$, as guaranteed by Proposition~\ref{charac_full_fourier_ultradistr}.

Writing $\tau_\ell=(\tau_\ell^{(1)},\dots,\tau_\ell^{(m)})$, a direct
computation yields
\[
\mathbb{L}^0u_\ell
= i c_\ell e^{i\tau_\ell \cdot t} \varphi_{j_\ell}(x) \sum_{r=1}^m (\tau_\ell^{(r)}+a_{r,0}\lambda_{j_\ell})\,\mathrm{d}t_r,
\]
and, for $1\le p\le m-1$,
\[
\mathbb{L}^pu_\ell
= i c_\ell e^{i\tau_\ell \cdot t}\varphi_{j_\ell}(x) \sum_{r=1}^m (\tau_\ell^{(r)}+a_{r,0}\lambda_{j_\ell})\,
\mathrm{d}t_r\wedge\mathrm{d}t_2\wedge\cdots\wedge\mathrm{d}t_{p+1}.
\]

For each $r=1,\dots,m$, define
\[
f_r(t,x)
:= i\sum_{\ell\in\mathbb{N}}
c_\ell(\tau_\ell^{(r)}+a_{r,0}\lambda_{j_\ell})
e^{i\tau_\ell\cdot t}\varphi_{j_\ell}(x).
\]
By \eqref{non_DC} and the definition of $c_\ell$, we have
\[
|c_\ell(\tau_\ell^{(r)}+a_{r,0}\lambda_{j_\ell})|
\le \exp\!\left(-\frac{\varepsilon_0}{2}
\Big(\|\tau_\ell\|^{1/\sigma}+j_\ell^{1/(2n\mu)}\Big)\right),
\]
hence $f_r\in\mathscr{F}_\mu$ for all $r$.

Define the $(p+1)$-form $f$ by
\[
f(t,x)=
\begin{cases}
	\displaystyle \sum_{r=1}^m f_r(t,x)\,\mathrm{d}t_r, & p=0,\\[1mm]
	\displaystyle \sum_{r=1}^m f_r(t,x)\,
	\mathrm{d}t_r\wedge\mathrm{d}t_2\wedge\cdots\wedge\mathrm{d}t_{p+1},
	& 1\le p\le m-1.
\end{cases}
\]
Then $f=\sum_{\ell}\mathbb{L}^pu_\ell$ with convergence in
$\mathscr{F}_{\mu,p+1}$, so $\mathbb{L}^{p+1}f=0$.

Moreover, setting
\[
v_\ell(t)=
\begin{cases}
	c_\ell e^{i(\lambda_{j_\ell}\boldsymbol{a}_0+\tau_\ell)\cdot t},
	& p=0,\\[1mm]
	c_\ell e^{i(\lambda_{j_\ell}\boldsymbol{a}_0+\tau_\ell)\cdot t}\,
	\mathrm{d}t_2\wedge\cdots\wedge\mathrm{d}t_{p+1},
	& 1\le p\le m-1,
\end{cases}
\]
we obtain $\mathrm{d}_tv_\ell
= e^{i\lambda_{j_\ell}\boldsymbol{a}_0\cdot t}f_{j_\ell}(t)$, which implies
$f\in\mathbb{E}_\mu^p$.

\medskip
We now show that the equation $\mathbb{L}^p v=f$ admits no solution $v\in\mathscr{F}'_{\mu,p}$.
Since both $\widehat u_{j_\ell}(\tau_\ell)$ and
$\widehat v_{j_\ell}(\tau_\ell)$ are solutions of
\[
a_{j_\ell}(\tau_\ell)\wedge U = \widehat f_{j_\ell}(\tau_\ell),
\]
Lemma~\ref{sol_ext_alg} implies that there exists
$w_\ell\in\textstyle\bigwedge^{p-1}\mathbb{C}^m$ such that
\[
\widehat u_{j_\ell}(\tau_\ell)
= \widehat v_{j_\ell}(\tau_\ell)
+ a_{j_\ell}(\tau_\ell)\wedge w_\ell.
\]

If $p=0$, this leads to a contradiction, since $\textstyle\bigwedge^{-1} \mathbb{C}^m=\{0\}$ and
$u\notin\mathscr{F}'_{\mu,0}$.
Assume therefore $1\le p\le m-1$.
By reordering coordinates if necessary, we may assume
\[
|\tau_\ell^{(1)}+\lambda_{j_\ell}a_{1,0}|
= \|\tau_\ell+\lambda_{j_\ell}\boldsymbol{a}_0\|.
\]

Let
\[
\zeta_\ell :=
\widehat u_{j_\ell}(\tau_\ell)-\widehat v_{j_\ell}(\tau_\ell)
= a_{j_\ell}(\tau_\ell)\wedge w_\ell,
\qquad
w_\ell\in\textstyle\bigwedge^{p-1}\mathbb C^m,
\]
as given by Lemma~\ref{sol_ext_alg}.
Writing
\[
\zeta_\ell=\sum_{|K|=p}\zeta_K^{(\ell)}\,\mathrm dt_K,
\]
a detailed combinatorial analysis of the wedge product
$a_{j_\ell}(\tau_\ell)\wedge w_\ell$
(see Lemma~\ref{lem:combinatorial_identity} below)
yields the identity
\begin{equation}\label{zeta_1}
	\zeta_{(1)}^{(\ell)}
	=
	\sum_{s=1}^{p+1}
	(-1)^s
	\frac{\tau_\ell^{(s)}+\lambda_{j_\ell}a_{s,0}}
	{\tau_\ell^{(1)}+\lambda_{j_\ell}a_{1,0}}
	\zeta_{(s)}^{(\ell)}.
\end{equation}

Using this identity and the growth estimates satisfied by
$\widehat v_{j_\ell}(\tau_\ell)$, one obtains
\[
|c_\ell|
\le
C_\varepsilon
\exp\!\left(
\varepsilon\bigl(\|\tau_\ell\|^{1/\sigma}+j_\ell^{1/(2n\mu)}\bigr)
\right),
\]
for every $\varepsilon>0$.
Choosing $\varepsilon=\varepsilon_0/4$ contradicts the definition of $c_\ell$,
hence such a distributional solution $v$ cannot exist.

\begin{lemma}\label{lem:combinatorial_identity}
	Let $1 \leq p \leq m-1$ and let
	\[
	\mathscr L
	=
	i\sum_{r=1}^m \xi_r\,\mathrm dt_r
	\in \textstyle\bigwedge^1 \mathbb C^m,
	\quad \mbox{ with } \xi_1 \neq 0.
	\]
	Assume that $\zeta \in \textstyle\bigwedge^p \mathbb C^m$ is of the form
	\[
	\zeta = \mathscr L \wedge w,
	\quad \mbox{ for some }
	w \in \textstyle\bigwedge^{p-1} \mathbb C^m.
	\]
	Writing
	\[
	w = \sum_{|J|=p-1} w_J \, \mathrm dt_J,
	\qquad
	\zeta = \sum_{|K|=p} \zeta_K \, \mathrm dt_K,
	\]
	and defining, for $s=1,\dots,p+1$,
	\[
	\mathrm dt_{(s)} := \mathrm dt_1 \wedge \cdots \wedge \widehat{\mathrm dt_s}
	\wedge \cdots \wedge \mathrm dt_{p+1},
	\qquad
	\zeta_{(s)} := \zeta_{(1,\dots,\widehat s,\dots,p+1)},
	\]
	the following identity holds:
	\begin{equation}\label{zeta_identity}
		\zeta_{(1)}
		=
		\sum_{s=1}^{p+1}
		(-1)^s
		\frac{\xi_s}{\xi_1}
		\zeta_{(s)}.
	\end{equation}
\end{lemma}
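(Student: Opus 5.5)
The plan is to avoid expanding $\mathscr L\wedge w$ coefficient by coefficient. Instead I will use the single consequence of $\zeta=\mathscr L\wedge w$ that matters here, namely
\[
\mathscr L\wedge\zeta=\mathscr L\wedge\mathscr L\wedge w=0,
\]
since $\mathscr L$ is a $1$-form and so $\mathscr L\wedge\mathscr L=0$. This is the easy direction of Lemma~\ref{sol_ext_alg}, and it turns the claimed identity into reading off one coefficient of a vanishing $(p+1)$-form. Because $p\le m-1$, the multi-index $(1,\dots,p+1)$ is admissible, so the coefficient of $\mathrm dt_1\wedge\cdots\wedge\mathrm dt_{p+1}$ in $\mathscr L\wedge\zeta$ makes sense.

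First I compute that coefficient. A product $\mathrm dt_r\wedge\mathrm dt_K$ with $|K|=p$ contributes to $\mathrm dt_1\wedge\cdots\wedge\mathrm dt_{p+1}$ only when $r=s\in\{1,\dots,p+1\}$ and $K=(1,\dots,\widehat s,\dots,p+1)$. Moving $\mathrm dt_s$ past $\mathrm dt_1,\dots,\mathrm dt_{s-1}$ into position $s$ costs the sign $(-1)^{s-1}$, so
\[
\mathrm dt_s\wedge\mathrm dt_{(s)}=(-1)^{s-1}\,\mathrm dt_1\wedge\cdots\wedge\mathrm dt_{p+1}.
\]
Hence the coefficient of $\mathrm dt_1\wedge\cdots\wedge\mathrm dt_{p+1}$ in $\mathscr L\wedge\zeta$ is $i\sum_{s=1}^{p+1}(-1)^{s-1}\xi_s\,\zeta_{(s)}$, and it must vanish.

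Next I isolate the $s=1$ term and divide by $\xi_1\neq0$, which gives
\[
\zeta_{(1)}=\sum_{s=2}^{p+1}(-1)^{s}\frac{\xi_s}{\xi_1}\,\zeta_{(s)} .
\]
This is \eqref{zeta_identity} once the $s=1$ summand is excluded from the sum. Read literally with $s=1$ included, that summand would be $-\zeta_{(1)}$, and the identity would instead say $2\zeta_{(1)}=\sum_{s\ge2}(\cdots)$. So the identity should be understood as the relation expressing $\zeta_{(1)}$ through $\zeta_{(2)},\dots,\zeta_{(p+1)}$, and I will state explicitly that this is the form proved. This is also the form needed in the necessity argument: all coefficients $\xi_s/\xi_1$ there have modulus at most $1$, because index $1$ was chosen to maximize $|\tau_\ell^{(r)}+\lambda_{j_\ell}a_{r,0}|$.

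The only delicate step is the sign bookkeeping, and I would fix it by checking $p=1$ directly. There $\zeta=\mathscr L\, w$ with $w$ a scalar, so $\zeta_{(1)}=\zeta_2=i\xi_2 w$ and $\zeta_{(2)}=\zeta_1=i\xi_1 w$. The formula predicts $\zeta_{(1)}=(+1)\frac{\xi_2}{\xi_1}\zeta_{(2)}=i\xi_2 w$, which agrees. Otherwise the argument is a one-line consequence of $\mathscr L\wedge\mathscr L=0$ and needs no estimate.
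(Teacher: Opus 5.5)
Your proof is correct, and you are right to change the statement. The identity holds only with the sum running over $s=2,\dots,p+1$. The version as written is actually false, not just loosely phrased. If the $s=1$ summand is kept, the right-hand side equals $\xi_1^{-1}\sum_{s=1}^{p+1}(-1)^s\xi_s\zeta_{(s)}$, and this is zero. The stated claim therefore reduces to $\zeta_{(1)}=0$. Your own $p=1$ check, which gives $\zeta_{(1)}=i\xi_2 w$, refutes that whenever $\xi_2 w\neq 0$. The paper's proof reaches the same correct relation $\sum_{s=1}^{p+1}(-1)^s\xi_s\zeta_{(s)}=0$ and then makes exactly this slip when it solves for $\zeta_{(1)}$. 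Your corrected form is also the one the necessity argument can use, since there $|\xi_s/\xi_1|\le 1$.

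You and the paper differ in how the vanishing relation is obtained.

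\emph{The paper's route.} It expands $\zeta=\mathscr L\wedge w$ coefficient by coefficient, getting $\zeta_K=i\sum_{r\in K}(-1)^{\mathrm{pos}(r,K)-1}\xi_r w_{K\setminus\{r\}}$. It then specializes to the multi-indices $K_s=(1,\dots,\widehat s,\dots,p+1)$. Finally it checks by hand that in $\sum_s(-1)^s\xi_s\zeta_{(s)}$ the terms $\xi_r\xi_s w_{\{1,\dots,p+1\}\setminus\{r,s\}}$ cancel in pairs.

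\emph{Your route.} You note that $\mathscr L\wedge\zeta=\mathscr L\wedge\mathscr L\wedge w=0$ and read off the coefficient of $\mathrm dt_1\wedge\cdots\wedge\mathrm dt_{p+1}$. This needs only the single sign $\mathrm dt_s\wedge\mathrm dt_{(s)}=(-1)^{s-1}\,\mathrm dt_1\wedge\cdots\wedge\mathrm dt_{p+1}$. It is shorter and less error-prone. It also shows that the relation depends only on $\mathscr L\wedge\zeta=0$, which by Lemma~\ref{sol_ext_alg} is equivalent to $\zeta\in\mathscr L\wedge\bigwedge^{p-1}\mathbb C^m$, since $\mathscr L\neq 0$.

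The paper's computation is more explicit, because it gives every coefficient of $\zeta$ in terms of $w$. That extra information is not used anywhere.
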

\begin{proof}
	By assumption,
	\[
	\zeta =	\mathscr L \wedge w = 	\left( i\sum_{r=1}^m \xi_r\,\mathrm dt_r \right)
	\wedge 	\left( \sum_{|J|=p-1} w_J\,\mathrm dt_J \right).
	\]
	Expanding the wedge product, we obtain
	\[
	\zeta 	= 	i\sum_{r=1}^m \sum_{|J|=p-1}  \xi_r w_J \, 	\mathrm dt_r \wedge \mathrm dt_J.
	\]
	
	Let $K=(k_1,\dots,k_p)$ be a strictly increasing multi-index.
	The coefficient $\zeta_K$ of $\mathrm dt_K$ arises precisely from the terms
	for which $r\in K$ and $J=K\setminus\{r\}$.
	In this case,
	\[
	\mathrm dt_r \wedge \mathrm dt_{K\setminus\{r\}} 	=
	(-1)^{\mathrm{pos}(r,K)-1}\,\mathrm dt_K,
	\]
	where $\mathrm{pos}(r,K)$ denotes the position of $r$ inside $K$.
	Therefore,
	\[
	\zeta_K 	= 	i\sum_{r\in K} (-1)^{\mathrm{pos}(r,K)-1} \xi_r\, w_{K\setminus\{r\}}.
	\]
	
	We now restrict attention to the family of multi-indices
	\[
	K_s := (1,\dots,\widehat s,\dots,p+1), 	\qquad s=1,\dots,p+1,
	\]
	and denote $\zeta_{(s)}:=\zeta_{K_s}$.
	For such indices, the elements of $K_s$ are exactly the integers
	$\{1,\dots,p+1\}\setminus\{s\}$.
	Hence, for $r\in K_s$, there are two cases.
	
	If $r<s$, then $r$ occupies position $r$ in $K_s$, and thus
	\[
	(-1)^{\mathrm{pos}(r,K_s)-1}=(-1)^{r-1}.
	\]
	If $r>s$, then $r$ occupies position $r-1$ in $K_s$, yielding
	\[
	(-1)^{\mathrm{pos}(r,K_s)-1}=(-1)^{r}.
	\]
	Consequently,
	\[
	\zeta_{(s)} = \sum_{1\le r<s} i(-1)^{r-1}\xi_r\, w_{(r,s)} 	+
	\sum_{j<r\le p+1} i(-1)^{r}\xi_r\, w_{(s,r)}.
	\]
	
	We now consider the linear combination
	\[
	S := \sum_{s=1}^{p+1} (-1)^s \xi_s \zeta_{(s)}.
	\]
	Substituting the expression above and rearranging the sums, we obtain
	\[
	S = i\sum_{r<s} \Bigl[ (-1)^{s+r-1} + (-1)^{s+r} \Bigr] \xi_s\xi_r\, w_{(r,s)}. 
	\]
	Since
	\[
	(-1)^{s+r-1} + (-1)^{s+r} = 0,
	\]
	all terms cancel pairwise, and therefore $S=0$.
	
	Thus,
	\[
	\sum_{s=1}^{p+1} (-1)^s \xi_s \zeta_{(s)} = 0.
	\]
	Using the assumption $\xi_1\neq 0$, we can solve this relation for
	$\zeta_{(1)}$, obtaining
	\[
	\zeta_{(1)} = \sum_{s=1}^{p+1} (-1)^s \frac{\xi_s}{\xi_1} \zeta_{(s)},
	\]
	which proves \eqref{zeta_identity}.
\end{proof}

\begin{remark}
	In several related works, such as
	\cite{ALMed24,AviMed25,BerPet99jmaa},
	the authors consider a weaker notion of global solvability.
	In the present setting, this weaker requirement can be formulated as
	\begin{equation}\label{weak_GS}
		f \in \mathbb{E}_\mu^p
		\quad \Longrightarrow \quad
		\exists\, u \in \mathscr{F}'_{\mu,p}
		\ \text{such that}\ 
		\mathbb{L}^p u = f .
	\end{equation}
	
	By definition, global solvability of $\mathbb{L}^p$ in the sense adopted in
	this paper (solvability with solutions in $\mathscr{F}_{\mu,p}$)
	clearly implies \eqref{weak_GS}.
	The significance of Theorem~\ref{thm_GS} lies in the fact that, for the class
	of operators considered here, these two notions of solvability actually
	coincide.
	
	Indeed, in the proof of the sufficiency part of Theorem~\ref{thm_GS} we showed
	that for every $f \in \mathbb{E}_\mu^p$ there exists a solution
	$u \in \mathscr{F}_{\mu,p}$, which in particular implies \eqref{weak_GS}.
	Conversely, in the proof of the necessity, we constructed an element
	$f \in \mathbb{E}_\mu^p$ for which no solution
	$u \in \mathscr{F}'_{\mu,p}$ to $\mathbb{L}^p u = f$ exists, showing that
	\eqref{weak_GS} fails whenever $\mathbb{L}^p$ is not globally solvable in our
	sense.
	
	This equivalence shows that our choice of definition is natural in the
	present framework.
	Moreover, it has the additional advantage of allowing the use of
	\cite[Lemma~2.2]{Araujo2017} in the proof of
	Proposition~\ref{equiv_L0}, which relies on the DFS structure of the spaces
	$\mathscr{F}_{\mu,p}$.
\end{remark}

\section{Global Hypoellipticity}\label{sec-hypoellipticity}

We now turn to the problem of global hypoellipticity for the operators
$\mathbb{L}^p$.
As in the case of global solvability, the behavior of the complex turns out
to be sharply different according to the degree $p$.

For $p=0$, a complete characterization is already available in the
literature.
More precisely, Propositions~3.1 and~3.2 of \cite{AviCapKir25} yield the
following result.

\begin{theorem}\label{hypo0}
	The operator $\mathbb{L}^0$ is globally hypoelliptic if and only if
	the set $\mathcal{Z}$ defined in \eqref{set_Z} is finite and
	$\boldsymbol{a}_0$ satisfies the Diophantine condition \eqref{DC}
	with respect to the spectrum $\{\lambda_j\}_{j\in\mathbb{N}}$
	of $P(x,D_x)$.
\end{theorem}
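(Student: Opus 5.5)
By Proposition~\ref{equiv_L0} it suffices to treat $\mathbb{L}^0=\mathbb{L}^0_0$, so we assume $a=a_0$ is constant. We then work entirely at the level of the full Fourier coefficients. For $u\in\mathscr{F}'_{\mu,0}$ and $f=\mathbb{L}^0u$, equation \eqref{a_wedge_u_f} with $p=0$ becomes the scalar system
\[
i(\tau_r+\lambda_j a_{r,0})\,\widehat u_j(\tau)=\widehat f_{r,j}(\tau),\qquad r=1,\dots,m .
\]
Whenever $\tau+\lambda_j\boldsymbol a_0\neq0$, we choose $r$ realizing $\|\tau+\lambda_j\boldsymbol a_0\|$ and obtain $|\widehat u_j(\tau)|\le|\widehat f_j(\tau)|/\|\tau+\lambda_j\boldsymbol a_0\|$. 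The pairs with $\tau+\lambda_j\boldsymbol a_0=0$ are exactly $j\in\mathcal Z$ with $\tau=-\lambda_j\boldsymbol a_0$, one $\tau$ per resonant $j$. For these pairs $\widehat u_j(\tau)$ is unconstrained, apart from the ultradistribution bound \eqref{seq-full-coeff-distr}.

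\emph{Sufficiency.} Assume $\mathcal Z$ is finite and \eqref{DC} holds. Let $f\in\mathscr{F}_{\mu,1}$, so that $|\widehat f_j(\tau)|\le C e^{-\varepsilon_0(\|\tau\|^{1/\sigma}+j^{1/(2n\mu)})}$ by Proposition~\ref{charac_full_fourier-functions}. Applying \eqref{DC} with $\varepsilon=\varepsilon_0/2$ gives the decay $C'e^{-\frac{\varepsilon_0}{2}(\dots)}$ for all non-resonant coefficients. The finitely many resonant pairs $(-\lambda_j\boldsymbol a_0,j)$, $j\in\mathcal Z$, contribute only a finite sum of the functions $e^{-i\lambda_j\boldsymbol a_0\cdot t}\varphi_j(x)$, and each of these lies in $\mathscr F_\mu$. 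Proposition~\ref{charac_full_fourier-functions} then yields $u\in\mathscr{F}_{\mu,0}$. A technical point is that $\sigma$ must be uniform: $f$ lies in some $\mathcal S_{\sigma',\mu}$, and we may enlarge $\sigma'$ so that it is at least the $\sigma$ in \eqref{DC}. This is harmless, since \eqref{DC} for $\sigma$ implies it for every larger $\sigma$.

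\emph{Necessity.} We argue in two cases.
\begin{enumerate}
\item If $\mathcal Z$ is infinite, set $u=\sum_{j\in\mathcal Z}e^{-i\lambda_j\boldsymbol a_0\cdot t}\varphi_j(x)$. Its coefficients have modulus $1$, so $u\in\mathscr F'_{\mu,0}\setminus\mathscr F_{\mu,0}$ by Propositions~\ref{charac_full_fourier_ultradistr} and~\ref{charac_full_fourier-functions}. On the other hand $\mathbb L^0u=0$, which contradicts hypoellipticity.
\item If \eqref{DC} fails for every $\sigma>1$, take the sequence $(\tau_\ell,j_\ell)$ from \eqref{non_DC}. We may assume the $j_\ell$ or the $\tau_\ell$ are distinct, passing to a subsequence with distinct pairs. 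Set $u=\sum_\ell e^{i\tau_\ell\cdot t}\varphi_{j_\ell}(x)$ with bounded coefficients, so $u\in\mathscr F'_{\mu,0}\setminus\mathscr F_{\mu,0}$. Its image $\mathbb L^0u$ has coefficients bounded by $\|\tau_\ell+\lambda_{j_\ell}\boldsymbol a_0\|\le e^{-\varepsilon_0(\dots)}$, so $\mathbb L^0u\in\mathscr F_{\mu,1}$, again a contradiction.
\end{enumerate}

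The main obstacle is the quantifier structure over $\sigma$ in the space $\mathscr F_\mu=\bigcup_\sigma\mathcal S_{\sigma,\mu}$ versus the phrase ``for some $\sigma>1$'' in \eqref{DC}. In the necessity argument the failing sequence depends on $\sigma$. To conclude that $\mathbb L^0u\in\mathscr F_{\mu,1}$, it suffices to have membership in a single $\mathcal S_{\sigma,\mu}$, and failure at one $\sigma$ provides exactly this. Care is also needed that $\|\tau_\ell\|+j_\ell\to\infty$ gives infinitely many distinct pairs, so that the constructed $u$ genuinely fails the decay required by Proposition~\ref{charac_full_fourier-functions}. These steps reproduce Propositions~3.1 and~3.2 of \cite{AviCapKir25} in the present notation.
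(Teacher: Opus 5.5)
Your necessity argument is correct. It actually proves slightly more than you state: failure of $(\mathrm{DC}_{\sigma,\mu})$ at a single $\sigma$ already yields $u\in\mathscr F'_{\mu,0}\setminus\mathscr F_{\mu,0}$ with $\mathbb L^0u\in\mathcal S_{\sigma,\mu}$. Your treatment of the finitely many resonant pairs is also fine. Overall this is the Fourier-coefficient argument behind Propositions~3.1--3.2 of \cite{AviCapKir25}, which the paper only quotes after the reduction in Proposition~\ref{equiv_L0}.

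\textbf{The gap: making $\sigma$ uniform in the sufficiency part.} Your claim that $(\mathrm{DC}_{\sigma,\mu})$ implies $(\mathrm{DC}_{\sigma'',\mu})$ for every $\sigma''>\sigma$ is backwards. For $\|\tau\|\ge1$, the quantity $\|\tau\|^{1/\sigma}$ decreases as $\sigma$ grows. So the lower bound $C_\varepsilon e^{-\varepsilon(\|\tau\|^{1/\sigma}+j^{1/(2n\mu)})}$ increases, and the condition becomes \emph{stronger}; it passes only to smaller indices. The datum $f=\mathbb L^0u$ is only known to lie in some $\mathcal S_{\sigma',\mu}$, and $\sigma'$ may exceed the index $\sigma$ at which you assume \eqref{DC}. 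In that case you divide $|\widehat f_j(\tau)|\le Ce^{-\varepsilon_0(\|\tau\|^{1/\sigma'}+j^{1/(2n\mu)})}$ by a lower bound containing $e^{-\varepsilon\|\tau\|^{1/\sigma}}$ with $\sigma<\sigma'$, and the decay you claim for the quotient no longer follows.

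\textbf{How to repair it.} The argument needs \eqref{DC} at the arbitrary index $\sigma'$ of the datum, so the hypothesis must be read as \eqref{DC} for every $\sigma>1$. With that reading, apply it directly at $\sigma'$ and the rest of your sufficiency proof goes through; the statement then matches exactly what your necessity part proves. An equivalent formulation uses the fact that $\|\tau+\lambda_j\boldsymbol a_0\|<1$ forces $\|\tau\|\le Cj^{M/(2n)}$. On near-resonant pairs this gives $\|\tau\|^{1/\sigma}\lesssim j^{1/(2n\mu)}$ as soon as $\sigma\ge M\mu$. Hence \eqref{DC} at a single $\sigma\ge M\mu$ is equivalent to the $\tau$-free bound $\|\tau+\lambda_j\boldsymbol a_0\|\ge C_\varepsilon e^{-\varepsilon j^{1/(2n\mu)}}$, and so to \eqref{DC} for all $\sigma>1$. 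That bound gives $u\in\mathcal S_{\sigma',\mu}$ whatever $\sigma'$ is.

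\textbf{Why the literal reading fails.} You read the hypothesis as ``for some $\sigma>1$'' (cf.\ your negation in Case~2). Under that reading, the condition at an index $1<\sigma<M\mu$ can be strictly weaker, e.g.\ for suitable Liouville-type $\boldsymbol a_0$. Your own Case~2 construction at index $M\mu$ then shows that $\mathbb L^0$ is not globally hypoelliptic. So in that form the sufficiency direction cannot be proved at all.
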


In contrast, the situation changes drastically for higher degrees.
The presence of nontrivial differential forms introduces additional
obstructions that prevent global hypoellipticity, regardless of any
Diophantine condition.
Adapting the strategy developed in \cite{DatMez20} to our setting, we prove
that global hypoellipticity fails for all $p \ge 1$.

\begin{theorem}\label{hypo>0}
	If $p \ge 1$, then the operator $\mathbb{L}^p$ is not globally
	hypoelliptic.
\end{theorem}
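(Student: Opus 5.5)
By Proposition~\ref{equiv_L0} we may assume $a=a_0$ is constant, so $\mathbb{L}^p=\mathbb{L}^p_0$. The plan is to exhibit, for each $p\ge 1$, a current $u\in\mathscr{F}'_{\mu,p}\setminus\mathscr{F}_{\mu,p}$ with $\mathbb{L}^p u=0$; this clearly violates global hypoellipticity. The idea, adapted from \cite{DatMez20}, is that in degree $p\ge1$ the kernel of $\mathbb{L}^p$ contains the image of $\mathbb{L}^{p-1}$, and that image is always large. This holds no matter what arithmetic properties $\boldsymbol{a}_0$ has.

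Concretely, we take $u=\mathbb{L}^{p-1}w$ for a suitable non-smooth $w\in\mathscr{F}'_{\mu,p-1}$. Then $\mathbb{L}^p u=\mathbb{L}^p\mathbb{L}^{p-1}w=0\in\mathscr{F}_{\mu,p+1}$. We work with full Fourier coefficients. The coefficient of $u$ at $(\tau,j)$ is $a_j(\tau)\wedge\widehat w_j(\tau)$, where $a_j(\tau)=i\sum_r(\tau_r+\lambda_j a_{r,0})\,\mathrm{d}t_r$. We choose $w$ supported on a single spatial mode and with a single time-form component. Fix any $j_0\in\mathbb{N}$ and set
\[
w(t,x)=\sum_{k\in\mathbb{N}} e^{ik t_1}\varphi_{j_0}(x)\,\mathrm{d}t_2\wedge\cdots\wedge\mathrm{d}t_{p},
\]
interpreting the form as the $0$-form $1$ when $p=1$. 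Since $p\le m-1$, the index $1$ does not appear among $2,\dots,p$, so there is room for $\mathrm{d}t_1$. The coefficients $\widehat w_{j_0}((k,0,\dots,0))=1$ are bounded, so Proposition~\ref{charac_full_fourier_ultradistr} gives $w\in\mathscr{F}'_{\mu,p-1}$. Because the coefficients have polynomial growth, $\mathbb{L}^{p-1}$ maps $w$ into $\mathscr{F}'_{\mu,p}$, and
\[
u=\mathbb{L}^{p-1}w=\sum_{k} i\sum_{r=1}^m(k\delta_{r1}+\lambda_{j_0}a_{r,0})\,e^{ikt_1}\varphi_{j_0}(x)\,\mathrm{d}t_r\wedge\mathrm{d}t_2\wedge\cdots\wedge\mathrm{d}t_p .
\]

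It remains to check that $u\notin\mathscr{F}_{\mu,p}$. Look at the coefficient of $\mathrm{d}t_1\wedge\mathrm{d}t_2\wedge\cdots\wedge\mathrm{d}t_p$ at $(\tau,j)=((k,0,\dots,0),j_0)$. It equals $i(k+\lambda_{j_0}a_{1,0})$, whose modulus tends to $\infty$ as $k\to\infty$. In particular it does not satisfy the exponential decay \eqref{seq-full-coeff-funct}, so Proposition~\ref{charac_full_fourier-functions} shows $u\notin\mathscr{F}_{\mu,p}$. Hence $u\in\mathscr{F}'_{\mu,p}$, $\mathbb{L}^pu=0$, and $u$ is not smooth, which proves the theorem.

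The main care point is bookkeeping. We need the distributional identity $\mathbb{L}^p\circ\mathbb{L}^{p-1}=0$ on $\mathscr{F}'$, which follows termwise from $a_j(\tau)\wedge a_j(\tau)=0$. We also need the constructed component to be genuinely nonzero, and this is guaranteed by choosing $\mathrm{d}t_1$ outside the indices $2,\dots,p$. If one prefers to avoid the polynomially growing coefficient, an alternative is to take $u$ directly as the closed form $\sum_k a_{j_0}((k,0,\dots,0))\wedge\mathrm{d}t_2\wedge\cdots\wedge\mathrm{d}t_p\,e^{ikt_1}\varphi_{j_0}/|k+\lambda_{j_0}a_{1,0}|$. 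This form has bounded, nondecaying coefficients and is still annihilated by $\mathbb{L}^p$.
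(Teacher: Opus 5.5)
Your proof is correct, and it takes a different route from the paper's.

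The paper splits according to the cardinality of $\mathcal{Z}$:
\begin{itemize}
\item When $\mathcal{Z}$ is infinite, it uses the resonant element $\sum_{j\in\mathcal{Z}}e^{-i\lambda_j\boldsymbol{a}_0\cdot t}\varphi_j(x)$ times a constant $p$-form.
\item When $\mathcal{Z}$ is finite, it sums the normalized exact pieces $C_{\tau,j}^{-1}e^{i\tau\cdot t}\varphi_j(x)\,a_j(\tau)\wedge\eta_j(\tau)$ over all $(\tau,j)\in\mathbb{Z}^m\times\mathcal{Z}^c$, and checks $\mathbb{L}^pu=0$ term by term.
\end{itemize}

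You use only the second mechanism, that exact forms lie in the kernel of $\mathbb{L}^p$. You realize it with a single eigenfunction $\varphi_{j_0}$ and frequencies along the $t_1$-axis, writing $u=\mathbb{L}^{p-1}w$ with $w\in\mathscr{F}'_{\mu,p-1}$ given explicitly. This removes the need for a case distinction. The failure of smoothness comes from the unbounded coefficient $i(k+\lambda_{j_0}a_{1,0})$ of $\mathrm{d}t_1\wedge\cdots\wedge\mathrm{d}t_p$. That coefficient does not depend on the spectrum of $P$ or on any arithmetic property of $\boldsymbol{a}_0$. For example, it works for $\boldsymbol{a}_0=0$, where $\mathcal{Z}^c=\emptyset$ and the paper must fall back on its first case. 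Also, $\mathbb{L}^pu=0$ follows directly from $\mathbb{L}^p\circ\mathbb{L}^{p-1}=0$ on currents.

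What the paper's split buys is extra structural information that the theorem does not need. When $\mathcal{Z}$ is infinite, its Case 1 element is a non-smooth closed current that is not in the range of $\mathbb{L}^{p-1}$. This is because its Fourier coefficient at $(-\lambda_j\boldsymbol{a}_0,j)$ is nonzero while $a_j(-\lambda_j\boldsymbol{a}_0)=0$. It therefore reflects the same resonance obstruction that governs the case $p=0$.

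One small fix for your optional normalized variant: drop the (at most one) $k$ with $k+\lambda_{j_0}a_{1,0}=0$ before dividing by $|k+\lambda_{j_0}a_{1,0}|$. Your main argument does not depend on this.
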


\begin{proof}
	Recall the definition of the set $\mathcal{Z}$ in \eqref{set_Z}.
	We distinguish two cases according to its cardinality.
	
	\smallskip
	\noindent\emph{Case 1: $\mathcal{Z}$ is infinite.}
	In this situation, consider the distribution
	\[
	v(t,x)
	=
	\sum_{j\in\mathcal{Z}}
	e^{-i(\lambda_j\boldsymbol{a}_0)\cdot t}\,\varphi_j(x),
	\]
	which belongs to $\mathscr{F}'_\mu$ but not to $\mathscr{F}_\mu$.
	Let $\eta$ be any nonzero $p$-form with constant coefficients on
	$\mathbb{T}^m$.
	Then $u := v\,\eta$ defines an element of
	$\mathscr{F}'_{\mu,p}\setminus\mathscr{F}_{\mu,p}$.
	A direct computation shows that $\mathbb{L}^p u = 0$,
	which proves that $\mathbb{L}^p$ is not globally hypoelliptic.
	
	\smallskip
	\noindent\emph{Case 2: $\mathcal{Z}$ is finite.}
	Let $\mathcal{Z}^c = \mathbb{N}\setminus\mathcal{Z}$.
	For each $(\tau,j)\in\mathbb{Z}^m\times\mathcal{Z}^c$, consider the
	$1$-form $a_j(\tau)$ defined in \eqref{ajtau}.
	Choose a $(p-1)$-form $\eta_j(\tau)$ with constant coefficients on
	$\mathbb{T}^m$ such that
	$a_j(\tau)\wedge\eta_j(\tau)\neq 0$.
	Writing
	\[
	a_j(\tau)\wedge\eta_j(\tau)
	=
	\sum_{|J|=p}\theta_j(\tau)_J\,\mathrm{d}t_J,
	\]
	define
	\[
	C_{\tau,j}
	:=
	\max\{|\theta_j(\tau)_J|:\ |J|=p\}
	\,>\,0.
	\]
	
	We then consider the $p$-form
	\[
	u(t,x)
	=
	\sum_{(\tau,j)\in\mathbb{Z}^m\times\mathcal{Z}^c}
	\frac{1}{C_{\tau,j}}\,
	e^{i\tau\cdot t}\,\varphi_j(x)\,
	a_j(\tau)\wedge\eta_j(\tau).
	\]
	By construction, $u$ belongs to
	$\mathscr{F}'_{\mu,p}\setminus\mathscr{F}_{\mu,p}$.
	Moreover, since $a_j(\tau)\wedge a_j(\tau)=0$, we have
	$\mathbb{L}^p u = 0$.
	
	In both cases, we obtain a distributional solution $u\notin
	\mathscr{F}_{\mu,p}$ of the homogeneous equation
	$\mathbb{L}^p u = 0$, which proves that $\mathbb{L}^p$ cannot be
	globally hypoelliptic for $p\ge1$.
\end{proof}


\bibliographystyle{plain} 
\bibliography{references2} 

\end{document}